\newtheorem{theorem}{Theorem}[section]
\newtheorem{lemma}[theorem]{Lemma}
\newtheorem{corollary}[theorem]{Corollary}
\newtheorem{proposition}[theorem]{Proposition}
\newtheorem{conjecture}[theorem]{Conjecture}
\newtheorem{observation}[theorem]{Observation}
\theoremstyle{definition}
\newtheorem{definition}[theorem]{Definition}
\newtheorem{remark}[theorem]{Remark}
\makeatletter \@addtoreset{equation}{section} \makeatother
\def\B{\mathcal{B}}
\def\C{\mathcal{C}}
\def\K{\mathcal{K}}
\def\M{\mathcal{M}}
\def\N{\mathcal{N}}
\def\O{\mathcal{O}}
\def\S{\mathcal{S}}
\def\U{\mathcal{U}}
\def\F{\mathbb{F}}
\def\PG{\mathrm{PG}}
\def\AG{\mathrm{AG}}
\begin{document}
\title{On Almost Complete Subsets of a Conic in $\mathrm{PG}(2,q)$,
Completeness of Normal Rational Curves and Extendability of Reed-Solomon Codes
\date{}}
\maketitle
\begin{center}
\textbf{D. Bartoli}$^{\text{a}}$\footnote{The research of  D. Bartoli, S. Marcugini, and F.~Pambianco was
 supported in part by Ministry for Education, University
and Research of Italy (MIUR) (Project ``Geometrie di Galois e
strutture di incidenza'')
 and by the Italian National Group for Algebraic and Geometric Structures and their Applications
 (GNSAGA - INDAM).}, \textbf{A. A. Davydov}$^{\text{b}}$\footnote{
 The research of A.A.~Davydov was carried out at the IITP RAS at the expense of the Russian
Foundation for Sciences (project 14-50-00150).}, \textbf{S. Marcugini}$^{\text{a1}}$, and \textbf{F. Pambianco}$^{\text{a1}}$\\
$^{\text{a}}$ \emph{Department of Mathematics and Computer Sciences,}\\
   \emph{Universit{\`a} degli Studi di Perugia}\\
  daniele.bartoli@unipg.it\quad stefano.marcugini@unipg.it\quad fernanda.pambianco@unipg.it\\
$^{\text{b}}$ \emph{Kharkevich Institute for Information Transmission Problems} \\
\emph{ Russian Academy of  Sciences, Moscow,  Russia}\\ adav@iitp.ru\\
\end{center}
\begin{abstract}
A subset $\S$ of a conic $\C$ in the projective plane $\PG(2,q)$ is called \emph{almost complete} (AC-subset for short) if
it can be extended to a larger arc in $\PG(2,q)$ only by the points of $\C\setminus \S$ and by the nucleus of $\C$ when $q$
is even. New upper bounds on the smallest size $t(q)$ of an AC-subset are obtained, in particular,
\begin{align*}
&t(q)<\sqrt{q(3\ln q+\ln\ln q +\ln3)}+\sqrt{\frac{q}{3\ln q}}+4\thicksim\sqrt{3q\ln q};\\
&t(q)<1.835\sqrt{q\ln q}.
\end{align*}
 The new bounds are used to increase regions of pairs $(N,q)$ for which it is proved that every normal rational curve in $\PG(N,q)$ is a complete $(q+1)$-arc or, equivalently, that no $[q+1,N+1,q-N+1]_q$  generalized doubly-extended
Reed-Solomon code can be extended to a $[q+2,N+1,q-N+2]_q$ MDS code.
\end{abstract}

\textbf{Mathematics Subject Classification (2010).} 51E21,
51E22, 94B05.

\textbf{Keywords.} Projective planes, almost complete subsets of a conic, small
almost complete subsets, completeness of normal rational curves, extendability of Reed-Solomon codes

\section{Introduction}\label{sec_Intro}

Let $\PG(N,q)$ be the $N$-dimensional projective space over the Galois field $\F_q$ of order $q$. An $n$-arc in $\PG(N,q)$ with $n>N + 1$ is a set of $n$ points such that no $N + 1$ points belong to the same hyperplane of $\PG(N,q)$. An $n$-arc of $\PG(N,q)$ is complete
if it is not contained in an $(n+1)$-arc of $\PG(N,q)$. In $\PG(N,q)$ with $2\le N\le q-2$, a normal rational curve is any $(q+1)$-arc projectively equivalent to the arc
$\{(1,t,t^2,\ldots,t^N):t\in \F_q\}\cup \{(0,\ldots,0,1)\}$.
 For an introduction to projective geometries over finite fields see \cite
{HirsBook,HirsStor-2001,HirsThas-2015}.

 Let an $[n,k,d]_q$ code be a $q$-ary linear code of length $n$, dimension $k$, and minimum distance $d$. If $d=n-k+1$,
  it is a maximum distance separable (MDS) code. The code dual to an $[n,k,n-k+1]_q$ MDS code is an $[n,n-k,k+1]_q$ MDS code.

Points (in the homogeneous coordinates) of an $n$-arc in $\PG(N,q)$ treated as columns define a generator matrix of an $[n,N+1,n-N]_q$ MDS code. If an $n$-arc in $\PG(N,q)$ is complete then the corresponding $[n,N+1,n-N]_q$  MDS code cannot be extended to an $[n+1,N+1,n-N+1]_q$ MDS code.
  For properties of linear MDS codes and their equivalence  to arcs see e.g.  \cite{HirsBook,HirsStor-2001,HirsThas-2015,Ball-book2015,BallBeule-arXiv2016,Chow-arX2015,HirsKorTorBook,Klein-Stor,%
LandSt,MWS,Roth_book,StorThasDM92,StorThasDM94,Thas_1992_MDS}.

The $j$-th column of a generator matrix of a $[q+1,N+1,q-N+1]_q$ generalized doubly-extended
Reed-Solomon (GDRS) code has the form
 $(v_j,v_j\alpha_j,v_j\alpha_j^2,\ldots,v_j\alpha_j^N)^T$, where $j=1,2,\ldots,q$; $\alpha_1,\ldots,\alpha_q$ are distinct elements of $\F_q$;
 $v_1,\ldots,v_q$ are nonzero (not necessarily distinct) elements of $\F_q$. Also, this matrix contains one more column $(0,\ldots,0,v)^T$ with $v\neq 0$.
 The code, dual to a GDRS code, is a GDRS code too.

Points (in the homogeneous coordinates) of a normal rational curve in $\PG(N,q)$ treated as columns define a generator matrix of a $[q+1,N+1,q-N+1]_q$ GDRS code.
Proposition~\ref{prop1_equiv} is well known.
\begin{proposition}\label{prop1_equiv}
   Let $N$ and $q$ be fixed integers with $2\le N\le q-2$. Moreover, let $q$ be a prime power. The following statements are equivalent:\\
$\bullet$ Every normal rational curve in $\PG(N,q)$ is a complete $(q+1)$-arc;\\
$\bullet$ No $[q+1,N+1,q-N+1]_q$  GDRS code can be extended to a $[q+2,N+1,q-N+2]_q$  MDS code.
\end{proposition}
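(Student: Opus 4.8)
The plan is to route everything through the dictionary between arcs and MDS codes that the excerpt has already established, and then to check that this dictionary identifies normal rational curves with GDRS codes and "completeness" with "non-extendability." First I would fix $k=N+1$ and recall the standard linear-algebra characterization of MDS codes: an $(N+1)\times n$ matrix $G$ over $\F_q$ generates an $[n,N+1,n-N]_q$ MDS code if and only if every $N+1$ of its columns are linearly independent. Reading the columns of $G$ as homogeneous coordinates of points, the condition "every $N+1$ columns are independent" is exactly "no $N+1$ of the points lie on a common hyperplane," i.e.\ the $n$ points form an $n$-arc in $\PG(N,q)$. This is the correspondence stated just before the proposition, and it sends projectively equivalent arcs to equivalent codes: a projective transformation is left multiplication of $G$ by a nonsingular matrix (which leaves the row space, hence the code, unchanged), while rescaling individual coordinate vectors is column scaling (a monomial equivalence).

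Next I would translate "complete" into "non-extendable." Extending the code to an $[n+1,N+1,n-N+1]_q$ MDS code means appending one column to $G$ so that the enlarged matrix still has every $N+1$ columns independent. Such a column must be nonzero and not a scalar multiple of an existing column—otherwise two columns are dependent and the code fails to be MDS—so as a point of $\PG(N,q)$ it is a genuinely new point, and the enlarged point set is an $(n+1)$-arc. Conversely, any point that extends the arc supplies a column that extends the code. Hence the code extends to a larger MDS code precisely when the arc extends to a larger arc; equivalently, the code is non-extendable exactly when the arc is complete. This upgrades the one-directional fact stated in the excerpt to an equivalence.

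Then I would specialize the dictionary to the two classes named in the statement. Dividing the $j$-th GDRS column $(v_j,v_j\alpha_j,\ldots,v_j\alpha_j^N)^T$ by $v_j\neq 0$ yields the point $(1,\alpha_j,\ldots,\alpha_j^N)$; since the $q$ elements $\alpha_1,\ldots,\alpha_q$ are distinct they exhaust $\F_q$, and the remaining column $(0,\ldots,0,v)^T$ gives $(0,\ldots,0,1)$. Thus the columns of any GDRS generator matrix, read as points, form exactly the standard normal rational curve. Conversely, every normal rational curve is by definition projectively equivalent to this standard one, the projective transformation being left multiplication of $G$ by a nonsingular matrix and the choice of scalars $v_j$ being column scaling. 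Both operations preserve the MDS property and preserve extendability, so the generator matrices of GDRS codes are, up to equivalence, precisely the matrices whose columns form a normal rational curve.

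Combining these three steps, "every normal rational curve in $\PG(N,q)$ is complete" is the same as "no normal rational curve extends to a larger arc," which under the dictionary reads "no $[q+1,N+1,q-N+1]_q$ GDRS code extends to a $[q+2,N+1,q-N+2]_q$ MDS code," giving both implications at once. I expect the hard part to be purely bookkeeping: verifying that extendability is genuinely invariant under projective and code equivalence—so that it suffices to argue for the standard representative—and confirming that appended columns correspond bijectively to new arc points rather than to repetitions of existing points. These checks are routine, so the proof is essentially the assembly of the stated facts with the completeness-versus-non-extendability translation made precise in both directions.
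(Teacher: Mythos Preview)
Your proposal is correct and is precisely the standard argument; note that the paper does not actually prove Proposition~\ref{prop1_equiv} but simply records it as well known, so there is no proof in the paper to compare against.
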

Due to Proposition \ref{prop1_equiv}, all results given below on completeness of normal rational curves can be reformulated in coding theory language for extendability of GDRS codes.

The completeness of  normal rational curves and related problems  are considered in numerous works starting from Segre's paper
  \cite{Segre_1955} of 1955; see for example
 \cite{HirsBook,HirsStor-2001,HirsThas-2015,Ball-book2015,BallBeule-arXiv2016,Chow-arX2015,HirsKorTorBook,Klein-Stor,LandSt,MWS,Roth_book,%
StorThasDM92,StorThasDM94,Thas_1992_MDS,%
Segre_1955,Ball-JEMS2012,BallBeule-DCC2012,KorStSz_1997_SpFilSubNRC,St_1992_ComplNRC,St_Th_GRS}, where surveys and references can be found. In particular, the following conjecture, connected with the famous Segre's three problems, is well known.

\begin{conjecture}\label{conj1} Let $2\le N\le q-2$. Every normal rational curve in $\PG(N,q)$  is a complete $(q+1)$-arc except for the cases $q$ even and $N\in\{2,q-2\}$ when one point can be added to the curve.
\end{conjecture}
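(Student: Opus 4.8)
The plan is to argue on the geometric side of Proposition~\ref{prop1_equiv}, proving directly that every normal rational curve $\Gamma$ in $\PG(N,q)$ is a complete $(q+1)$-arc, and to feed the new estimates on $t(q)$ into the projection technique that ties this completeness to almost complete subsets of a conic. First I would dispose of the endpoints. For $N=2$ a normal rational curve is a conic $\C$, and the classical dichotomy applies: $\C$ is a complete $(q+1)$-arc when $q$ is odd, whereas for $q$ even exactly one point, the nucleus of $\C$, extends $\C$ to a hyperoval. The companion value $N=q-2$ is the small-codimension mirror of the conic and produces the second exception by the corresponding direct analysis; these two endpoints behave symmetrically, consistent with the self-dual nature of GDRS codes noted above. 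The endpoints already exhibit the exact exceptions claimed, and the task for the remaining range $3\le N\le q-3$ is to show that no extension point exists at all.

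For the main range I would argue by contradiction: suppose a point $P$ extends $\Gamma$ to a $(q+2)$-arc. Choosing any $N-2$ points of $\Gamma$ and projecting from the $(N-3)$-space they span onto a complementary plane sends the remaining $q-N+3$ points of $\Gamma$ to a subset $\S$ of a conic $\C$ in $\PG(2,q)$, while $P$ maps to some point $\overline{P}$. Because projecting an arc from $N-2$ of its own points again yields an arc, $\S\cup\{\overline{P}\}$ is an arc in the plane, so $\overline{P}$ extends $\S$. If $\S$ is almost complete, this forces $\overline{P}$ onto $\C$, or onto the nucleus of $\C$ when $q$ is even. Letting the $N-2$ projection centres range over all $\binom{q+1}{N-2}$ choices produces a large system of incidence constraints on $\overline{P}$, and the goal is to intersect them so tightly that the only solution puts $P$ back on $\Gamma$, contradicting that $P$ is a genuine extension.

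This is exactly where the bounds on $t(q)$ enter. The projected set $\S$ omits only $N-2$ points of $\C$, so I would use the monotonicity of the property --- any subset of $\C$ containing an almost complete subset is itself almost complete --- to guarantee that $\S$ is almost complete once the number $N-2$ of omitted points stays below the threshold $q+1-t(q)$ dictated by the minimal AC-subset size. Concretely, the estimates $t(q)<\sqrt{q(3\ln q+\ln\ln q+\ln3)}+\sqrt{q/(3\ln q)}+4$ and $t(q)<1.835\sqrt{q\ln q}$ make the interval $N\le q+3-t(q)$ explicit, and every sharpening of $t(q)$ enlarges it. This monotone transfer from ``small minimal AC-subset'' to ``large range of complete normal rational curves'' is the mechanism behind the quantitative statements announced in the abstract.

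The main obstacle is twofold. For $N$ larger than $q+3-t(q)$ the projected subset $\S$ omits too many points of $\C$ to be certified almost complete, so the size bounds give no purchase and a different idea is needed there. More seriously, certifying that $\S$ is almost complete is only the input to the argument: one must still show that the incidence constraints from all projection centres are jointly inconsistent for an off-curve $P$, and consistent exactly for the nucleus when $q$ is even and $N\in\{2,q-2\}$. This combinatorial core may fail to pin $P$ down even when each $\S$ is almost complete, so in practice it can narrow the admissible interval further, and controlling it is an incidence analysis rather than a counting estimate. It is this combination --- the range beyond $q+3-t(q)$ and the subtle joint behaviour of the projected constraints with the even/odd role of the nucleus --- that keeps the full conjecture open and that the present bounds on $t(q)$ can only advance from the accessible side.
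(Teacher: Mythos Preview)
The statement you are addressing is Conjecture~\ref{conj1}, which the paper explicitly presents as an \emph{open} problem; there is no proof of it in the paper to compare against. What the paper actually proves are the partial results in Corollaries~\ref{cor1_main_NRC} and~\ref{cor1_q^odd_NRC}, which establish completeness only for $3\le N\le q+2-\Theta(q)$ (and, via \cite{St_1992_ComplNRC}, for certain $q=p^{2h+1}$). Your proposal is not a proof of the conjecture but a recapitulation of the Storme projection technique behind \eqref{eq1_Storm3.1}, and you yourself concede in your final paragraph that the method gives no purchase for $N>q+3-t(q)$ and that the conjecture remains open. So the proposal does not prove the stated conjecture, and it was never going to: the paper does not claim to either.

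Two specific points deserve correction. First, your treatment of the endpoint $N=q-2$ is a hand-wave: the appeal to ``the small-codimension mirror'' and self-duality of GDRS codes does not yield a direct analysis, and indeed for non-prime odd $q$ the case $N=q-2$ is not covered by any row of Table~\ref{tab1}. (For $q$ even and $N=q-2$, note also Remark~\ref{rem1_qeven}: many points extend the curve, not one, so the exception is about the \emph{size} of the extended arc, not uniqueness of the extending point.) Second, the ``joint incidence analysis'' you describe --- intersecting the constraints from all $\binom{q+1}{N-2}$ projections --- is precisely what is carried out in \cite{St_1992_ComplNRC,St_Th_GRS} and what produces the range \eqref{eq1_Storm3.1}; it is not an additional idea waiting to be exploited, and no known strengthening of it reaches beyond $N\le q+2-t(q)$. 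The genuine obstacle to the full conjecture is exactly the interval $q+2-t(q)<N\le q-3$, for which neither the projection method nor the approaches in rows~1--11 of Table~\ref{tab1} suffice in general.
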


 \begin{remark}\label{rem1_qeven}As a comment to Conjecture \ref{conj1} for $q$ even, note the following. If $N=2$, the point which can be added to a normal rational curve  is unique. But if $N=q-2$, there are many points in $\PG(q-2,q)$ which extend a normal rational curve to a $(q+2)$-arc, see \cite[Theorem 3.10]{StorThasDM94} for the geometrical characterization of these points.
 \end{remark}

 \begin{remark}\label{rem1} If $k\geq q$ then an $[n,k,n-k+1]_q$ MDS code has length $n\leq k+1$, see e.g. \cite{MWS,Roth_book}.
    For $2\le N\le q-2$, the well known \emph{MDS conjecture} assumes that an  $[n,N+1,n-N]_q$ MDS code (or equivalently an $n$-arc in $\PG(N,q)$) has length $n\le q+1$ except for the cases $q$ even and $N\in\{2,q-2\}$ when $n\le q+2$. The MDS conjecture considers all MDS codes (or all arcs) whereas Conjecture~\ref{conj1} says only something about normal rational curves (or GDRS codes). If the MDS conjecture holds for some pair $(N,q)$ then Conjecture~\ref{conj1} holds too, but in general the reverse is not true.
 \end{remark}

   For many pairs $(N,q)$ Conjecture \ref{conj1} is proved, see
 \cite{HirsBook,HirsStor-2001,HirsThas-2015,Ball-book2015,BallBeule-arXiv2016,HirsKorTorBook,Klein-Stor,LandSt,MWS,Roth_book,Thas_1992_MDS,%
Segre_1955,Ball-JEMS2012,BallBeule-DCC2012,Chow-arX2015,KorStSz_1997_SpFilSubNRC,St_1992_ComplNRC,St_Th_GRS} and the references therein; but in general, \emph{completeness of normal rational curves is an open problem}. The main known results are given in Table \ref{tab1}, where $p$ and $p_0(h)$ are \emph{prime}. For rows 1--6 of Table~\ref{tab1}, in fact, the MDS conjecture is proved. In \cite{BallBeule-arXiv2016}, see row 7 of Table \ref{tab1}, it is proved that
a subset of size $3(N-1)-6$ of a normal rational curve in $\PG(N,q)$, $q$ odd, cannot be extended to an arc of size $q+2$. This means that $3N-3\le q+1$ (otherwise the curve could not contain a such subset). So, $N\le\frac{q+4}{3}$. The regions of $N$ in rows 10--11 cover the ones in rows 6--8; we included  rows 6--8 in Table 1 as the methods used for them are useful for further research.

\begin{table}[ht]
\caption{ Pairs $(N,q)$ for which it is proved that every normal rational curve in $\PG(N,q)$ is a complete $(q+1)$-arc}
\begin{tabular}
{@{}c|c|c|c@{}}
  \hline
  no.& $q$&$N$  &Reference \\ \hline
  1&$q=p^{2h+1}$, $p\ge3$, $h\ge1$&$q - \frac{1\vphantom{^{H}}}{4}\sqrt{pq}+\frac{29}{16}p-3<N\le q - 3$&\cite[Table 3.4]{HirsStor-2001}\\  \hline
2&$q=p^h$, $p\ge5$&$q - \frac{1\vphantom{^{H}}}{2}\sqrt{q}+ 1<N\le q - 3$&\cite[Table 3.4]{HirsStor-2001}\\  \hline
  3&$q=p^h\ge23^2$; $p\ge3$;&$q - \frac{1\vphantom{^{H}}}{2}\sqrt{q}-1<N\le q - 3$&\cite[Table 3.4]{HirsStor-2001}\\
&$q\neq5^5,3^6$; $h$ even for $q=3$&&  \\\hline
   4&$q=2^h$, $h>2$&$q - \frac{1\vphantom{^{H}}}{2}\sqrt{q}- \frac{11}{4}<N\le q - 5$&\cite[Table 3.4]{HirsStor-2001}\\  \hline
   5&$q=p$ &$2\le N\le p-1$&\cite{Ball-book2015,Chow-arX2015,Ball-JEMS2012,BallBeule-DCC2012}\\\hline
   6&$q=p^2$&$2\le N\le 2\sqrt{q}-3$&\cite{Ball-book2015,Chow-arX2015,Ball-JEMS2012,BallBeule-DCC2012}\\\hline
  7&$q$ odd&$N\le\frac{q+4}{3}$&\cite[Theorem 1.4]{BallBeule-arXiv2016}\\\hline
   8&all $q$ &$3\le N\le q+2-6\sqrt{q\ln q}\vphantom{^{H^H}}$ & \cite[Theorem 3.3]{St_1992_ComplNRC}   \\\hline
   &$q=p^{2h+1}$; $p\ge p_0(h)$; $p_0(h)$ && \\
   9&is the smallest $\widehat{p}$ satisfying& $2\le N\le q-2$ & \cite[Theorem 3.5]{St_1992_ComplNRC} \\
   &$\sqrt{\widehat{p}}>24\sqrt{(2h+1)\ln\widehat{p}}$&&\\
   &$+\frac{29}{4\widehat{p}^{\,\,h-0.5}}-\frac{20}{\widehat{p}^{\,\,h+0.5}}$&&\\\hline
  10&$q$ odd & $2\le N\le q-2- \sqrt{7(q + 1)\ln q}\vphantom{^{H^{H^H}}}$ &\cite[Theorem 9.2]{KorStSz_1997_SpFilSubNRC} \\  \hline
  11&$q$ even & $3\le N\le q-1- \sqrt{7(q + 1)\ln q}\vphantom{^{H^{H^H}}}$ &\cite[Theorem 9.2]{KorStSz_1997_SpFilSubNRC} \\  \hline
\end{tabular}\label{tab1}
\end{table}

For the problem of completeness of normal rational curves we use tools connected with almost complete subsets of a conic in the projective plane $\PG(2,q)$.

 An $n$-arc in $\PG(2,q)$ is a set of $n$
points no three of which are collinear.
A point $P$ of $\PG(2,q)$ is covered by an arc $\K\subset\PG(2,q)$ if $P$ lies on a bisecant of $\K$. Throughout the paper, $\C=\{(1,t,t^2):t\in \F_q\}\cup \{(0,0,1)\}$ is a fixed conic in $\PG(2,q)$.
Any point subset of $\C$ is an arc.
For even $q$, denote by $\O$ the nucleus of $\C$.
Let
\begin{equation*}
    \M_q:=\left\{\begin{array}{ll}
               \PG(2,q)\setminus \C & \text{if }  q\text{ odd}  \\
               \PG(2,q)\setminus (\C\cup\{\O\}) & \text{if } q\text{ even}
             \end{array}.
    \right.
\end{equation*}
\begin{definition}\label{def1} \textbf{(i)}
In $\PG(2,q)$, an \emph{almost complete subset} of the conic $\C$ (\emph{AC-subset}, for short) is a proper subset of  $\C$ covering all the points of $\M_q$.
An $n$-AC-subset is an AC-subset of size~$n$.

\textbf{(ii)} An  AC-subset is \emph{minimal} if it does not contain a smaller AC-subset.
\end{definition}

Note that an AC-subset $\S$ is an arc that can be extended to a larger arc in $\PG(2,q)$ only by the points of $\C\setminus \S$ and by the nucleus $\O$ when $q$ is even. The term ``almost completeness'' was introduced in \cite[p. 94]{KorStSz_1997_SpFilSubNRC} for objects in the affine plane $\AG(2,q)$.

Denote by $t(q)$  \emph{the smallest size of an AC-subset in $\PG(2,q)$.}

In this work we provide new
  \emph{upper bounds} on  $t(q)$. This is an \emph{open problem}. It is addressed, for example, in
\cite{St_1992_ComplNRC,St_Th_GRS,Kovacs}. In \cite{Kovacs}, by probabilistic methods, it is proved that
\begin{align}\label{eq1_Kovacs}
   t(q)<6\sqrt{q\ln q}.
\end{align}

In \cite[Theorem 3.1]{St_1992_ComplNRC}, using the results and approaches of \cite{St_Th_GRS}, the following connection between $t(q)$ and the completeness of  normal rational curves is proved:

\emph{under the condition
\begin{align}\label{eq1_Storm3.1}
3\le N\le q+2-t(q),
\end{align}
\indent every normal rational curve in $\PG(N,q)$ is a complete $(q+1)$-arc.}

The \emph{aims of this paper} are as follows:
 obtain new upper bounds on the smallest size of an AC-subset of a conic in $\PG(2,q)$;
  using the bounds, extend  regions of pairs $(N,q)$ for which it is proved that every normal rational curve in $\PG(N,q)$ is a complete $(q+1)$-arc.

The paper is organized as follows. In Section \ref{sec_results} the main results of this paper are formulated. In Section \ref{sec_alg}, we consider an estimate of  the number of new covered points in one step of a step-by-step algorithm constructing AC-subsets. In Section \ref{sec_upper bound}, implicit and explicit upper bounds on $t(q)$, based on the results of Section~\ref{sec_alg}, are obtained. In Section~\ref{sec_comput}, computer assisted bounds on $t(q)$ are studied. In Section~\ref{ses_NRC}, new  bounds on $t(q)$ are applied to the problem of completeness of normal rational curves. Finally, in Appendix tables of
 the smallest known sizes $\overline{t}(q)$ of AC-subsets in $\PG(2,q)$ are given.
\section{The main results}\label{sec_results}
We introduce the following set of prime powers.
\begin{align}
  &Q_1:=\{8\le q\le 139129,~ q=p^m,~p \text{ prime},~m\ge2\}.\label{eq1_region_Q1}
\end{align}
Throughout the paper we denote
\begin{align}\label{eq1_Phi}
 &\Phi(q)=\sqrt{q(3\ln q+\ln\ln q +\ln3)}+\sqrt{\frac{q}{3\ln q}}+4\thicksim \sqrt{3}\sqrt{q\ln q};\displaybreak[3]\\
 &\Theta(q)=\left\{\begin{array}{ccc}
                     1.62\sqrt{q\ln q} & \text{for} & 8\le q\le 17041 \\
                     1.635\sqrt{q\ln q} & \text{for} & 17041< q\le 33013 \\
                     1.674\sqrt{q\ln q} & \text{for} & q\in Q_1\\
                     \min\{1.835\sqrt{q\ln q},\Phi(q)\}& \text{for}  & \text{all }q\ge5
                   \end{array}
 \right.,\label{eq1_Theta}
\end{align}
where
\begin{align*}
 \min\{1.835\sqrt{q\ln q},\Phi(q)\}=\left\{\begin{array}{ccc}
                     1.835\sqrt{q\ln q}& \text{for}  &  q< 12755807\\
                     \Phi(q)& \text{for}  &12755807\le q
                   \end{array}
 \right..
\end{align*}

The main result of this paper is Theorem \ref{th1_main_bnd} based on Theorems \ref{th3_explicit bndC}, \ref{th3_1835}, and \ref{th4_computBnd}.
\begin{theorem}\label{th1_main_bnd}
 The following upper
bound on the smallest size $t(q)$ of an AC-subset of the conic $\C$ in $\PG(2,q)$
holds:
\begin{align}\label{eq1_newBnd}
t(q)<\Theta(q).
\end{align}
\end{theorem}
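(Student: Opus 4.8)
The statement is an umbrella result: the piecewise function $\Theta(q)$ of \eqref{eq1_Theta} is assembled from four separate estimates, each supplied by one of the three theorems named just before it, and the inequality $t(q)<\Theta(q)$ is obtained by selecting, for every $q$, the sharpest estimate available in that range. So the plan is to partition the admissible $q$ according to the four cases defining $\Theta(q)$, check that the cases cover all relevant $q$ (all $q\ge8$, and all $q\ge5$ through the last line), and verify on each piece that the prescribed constant is delivered by Theorem~\ref{th3_explicit bndC}, \ref{th3_1835}, or \ref{th4_computBnd}. Taking, for each $q$, the minimum of the applicable bounds then reproduces $\Theta(q)$; in particular I must confirm the hand-over values $q=17041$, $q=33013$ and $q=12755807$ where the active bound switches, and that the set $Q_1$ of proper prime powers is covered by its own computer-assisted constant $1.674$.

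For the two analytic inputs I expect Theorem~\ref{th3_explicit bndC} to yield $t(q)<\Phi(q)\thicksim\sqrt{3q\ln q}$ and Theorem~\ref{th3_1835} the uniform $t(q)<1.835\sqrt{q\ln q}$ for $q\ge5$. Both should come from the step-by-step construction of Section~\ref{sec_alg}: keep a subset $\S\subseteq\C$ and repeatedly adjoin the point of $\C\setminus\S$ covering the most currently uncovered points of $\M_q$. The driving estimate is a lower bound, in terms of the current size $s=|\S|$ and the current number $U$ of uncovered points, for the number of points newly covered in one step. The relevant count is that each point of $\M_q$ lies on about $q/2$ bisecants of the full conic, and an uncovered point $X$ lies on exactly as many secants meeting $\S$ in a single point as there are points of $\S$ off the tangents through $X$, namely $s-O(1)$; double counting over $P\in\C\setminus\S$ then forces some $P$ to cover at least about $U(s-O(1))/(q+1-s)$ new points. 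Iterating the resulting recursion, in which $U$ is multiplied each step by a factor $1-c\,s/q$, and optimizing the stopping size produces a bound $c'\sqrt{q\ln q}$; the work that lowers $c'$ to $\sqrt3$, together with the corrections $\ln\ln q+\ln3$ and the additive $\sqrt{q/(3\ln q)}+4$, is precisely what separates $\Phi(q)$ from the cruder constants and from Kov{\'a}cs' $6\sqrt{q\ln q}$ of \eqref{eq1_Kovacs}.

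The sharper constants $1.62$, $1.635$, $1.674$ on the finite ranges should be the content of the computer-assisted Theorem~\ref{th4_computBnd}: one runs the construction of Section~\ref{sec_alg} (or a refined deterministic variant) over the actual fields $\F_q$ and records the sizes of the AC-subsets obtained, the quantities $\overline{t}(q)$ tabulated in the Appendix. For each $q$ in the computed ranges one then checks numerically that $\overline{t}(q)<1.62\sqrt{q\ln q}$ (respectively $1.635$, $1.674$), and since $t(q)\le\overline{t}(q)$ this certifies the corresponding piece of $\Theta(q)$. For the remaining, non-computed $q$ the analytic $\min\{1.835\sqrt{q\ln q},\Phi(q)\}$ takes over, the elementary comparison of its two arguments crossing over at $q=12755807$ exactly as recorded after \eqref{eq1_Theta}.

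With the inputs in hand the assembly is a routine case analysis, so the genuine difficulty lies in those inputs rather than in Theorem~\ref{th1_main_bnd}. I expect the main obstacle to be the sharp per-step covering estimate of Section~\ref{sec_alg}: both the naive averaging argument above and the analogous probabilistic estimate only reach the constant $2$, and getting down to $\sqrt3$ requires controlling the overlap among the $\binom{s}{2}$ bisecants of $\S$---equivalently the distribution of the multiplicities (the numbers of bisecants of $\S$ through a given point)---so that the guaranteed gain stays above the probabilistic threshold throughout, including the delicate final phase in which only a few points of $\M_q$ remain uncovered; that phase is what produces the $\ln\ln q+\ln3$ correction and pins down the additive constants in $\Phi(q)$. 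A secondary, purely bookkeeping obstacle is to confirm that the four regimes of \eqref{eq1_Theta} are exhaustive and mutually consistent, with the bounds matching or improving at every hand-over value, so that $\Theta(q)$ is a valid upper bound with no gaps.
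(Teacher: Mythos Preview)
Your assembly of Theorem~\ref{th1_main_bnd} from Theorems~\ref{th3_explicit bndC}, \ref{th3_1835}, and~\ref{th4_computBnd} is exactly what the paper does: it states the theorem and immediately says it is ``based on'' those three results, with no further argument. Your case analysis of the four regimes of $\Theta(q)$ and the hand-over points is correct and matches the paper's organization.

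Where you go astray is in your diagnosis of \emph{why} Theorem~\ref{th3_explicit bndC} reaches the constant $\sqrt3$. You write that the naive averaging argument ``only reaches the constant $2$'' and that getting to $\sqrt3$ ``requires controlling the overlap among the $\binom{s}{2}$ bisecants of $\S$---equivalently the distribution of the multiplicities.'' That is not what the paper does, and no such second-moment or overlap analysis appears anywhere in Section~\ref{sec_upper bound}. The paper uses only the crude averaging bound~\eqref{eq2_>=aver}, iterates it to get $f_q(w)<e^{-(w-2)^2/2q}$ as in~\eqref{eq3_fqw_approx}, and then applies a \emph{truncation trick}: stop the iteration as soon as at most $\xi$ points of $\M_q$ remain uncovered, and finish by adding at most $\xi$ further points of $\C$ one at a time (each covers at least one new point). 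This yields $t(q)\le \sqrt{2q\ln(q^2/\xi)}+\xi+4$, and the constant $\sqrt3$ emerges purely from optimizing over $\xi$, with the choice $\xi=\sqrt{q/(3\ln q)}$ (Remark~\ref{rem3_xi_deriv}). Taking $\xi=1$ indeed gives the constant $2$ you mention (see~\eqref{eq3_xi=1}), but the improvement to $\sqrt3$ is an elementary optimization, not a refinement of the per-step covering estimate. The $\ln\ln q+\ln3$ and the additive $\sqrt{q/(3\ln q)}$ in $\Phi(q)$ are likewise artifacts of this choice of $\xi$, not of any endgame analysis of the ``delicate final phase.''
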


Similarly to \cite{St_1992_ComplNRC}, we use upper bounds on  $t(q)$ to prove the completeness of the normal rational curves as arcs in projective spaces.
From Theorem \ref{th1_main_bnd} and \cite[Theorems 3.1,3.5]{St_1992_ComplNRC} we obtained Corollaries \ref{cor1_main_NRC} and \ref{cor1_q^odd_NRC}; see Section \ref{ses_NRC}.
\begin{corollary}\label{cor1_main_NRC}
 Let
\begin{align}\label{eq1_new_copl}
 3\le N\le q+2-\Theta(q).
  \end{align}
  Then every normal rational curve in $\PG(N,q)$ is a complete $(q + 1)$-arc.
\end{corollary}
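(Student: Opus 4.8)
The plan is to deduce the statement directly by combining the new bound of Theorem~\ref{th1_main_bnd} with the completeness criterion of \cite[Theorem 3.1]{St_1992_ComplNRC} recalled in \eqref{eq1_Storm3.1}; no new geometric argument is needed, so the work reduces to a chaining of inequalities. First I would recall that Theorem~\ref{th1_main_bnd} gives $t(q)<\Theta(q)$ for every $q$ in the relevant range, where $\Theta(q)$ is the piecewise function defined in \eqref{eq1_Theta}, whose last branch $\min\{1.835\sqrt{q\ln q},\Phi(q)\}$ is valid for all $q\ge 5$.

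Next, I would start from the hypothesis \eqref{eq1_new_copl}, namely $3\le N\le q+2-\Theta(q)$, and use the strict inequality $t(q)<\Theta(q)$ to pass from $\Theta(q)$ to $t(q)$. Since $t(q)<\Theta(q)$ implies $q+2-\Theta(q)<q+2-t(q)$, one obtains $3\le N\le q+2-\Theta(q)<q+2-t(q)$, and in particular $3\le N\le q+2-t(q)$. Thus the hypothesis \eqref{eq1_Storm3.1} of \cite[Theorem 3.1]{St_1992_ComplNRC} is satisfied for the pair $(N,q)$.

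Finally, applying that theorem yields the conclusion that every normal rational curve in $\PG(N,q)$ is a complete $(q+1)$-arc, which is exactly the assertion of the corollary. Via Proposition~\ref{prop1_equiv} this could equivalently be phrased as the non-extendability of the associated $[q+1,N+1,q-N+1]_q$ GDRS code, but that reformulation is not needed for the proof itself.

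Since both ingredients are already established, there is essentially no genuine obstacle in the argument; the only point requiring attention is bookkeeping. One must check that $\Theta(q)$ is defined and that the bound of Theorem~\ref{th1_main_bnd} is valid on the entire range of $q$ over which the corollary is asserted. This is guaranteed because the last branch in \eqref{eq1_Theta} holds for all $q\ge 5$, while the sharper branches merely lower $\Theta(q)$ for small $q$ and hence only strengthen the chain of inequalities above.
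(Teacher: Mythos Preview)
Your proposal is correct and follows exactly the same approach as the paper: substitute the bound $t(q)<\Theta(q)$ from Theorem~\ref{th1_main_bnd} into the completeness criterion \eqref{eq1_Storm3.1} of \cite[Theorem~3.1]{St_1992_ComplNRC}. The paper's own proof is a one-line remark to this effect, and your additional bookkeeping about the validity range of $\Theta(q)$ only makes the argument more explicit.
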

\begin{corollary}\label{cor1_q^odd_NRC}
Let $h\ge 1$ be a fixed integer. Let $p_0(1)=757$, $p_0(2)=1399$, $p_0(3)=2129$, $p_0(4)=2887$,
 $p_0(5)=3623$. Also, for $h\ge6$ let $p_0(h)$ be the smallest odd prime $\widehat{p}$ satisfying
\begin{align}\label{eq1_q^odd}
       \sqrt{\widehat{p}}>4c\sqrt{(2h+1)\ln \widehat{p}}\,+\frac{29}{4\widehat{p}^{\,\,h-0.5}}-\frac{20}{\widehat{p}^{\,\,h+0.5}}\,\,,
\end{align}
where  $c=1.62$ for $6\le h\le19$, $c=1.635$ for $20\le h\le28$,
$c=1.835$ for $h\ge29$.

Then  for every odd prime $p\ge p_0(h)$ in $\PG(N,q)$ with $q=p^{2h+1}$, $2\le N\le q-2$,
every normal rational curve   is a complete $(q+1)$-arc.
\end{corollary}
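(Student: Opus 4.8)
The plan is to run the scheme of \cite[Theorem 3.5]{St_1992_ComplNRC} (row~9 of Table~\ref{tab1}) essentially verbatim, substituting the new bound of Theorem~\ref{th1_main_bnd} for the weaker estimate $t(q)<6\sqrt{q\ln q}$ of \eqref{eq1_Kovacs}, \cite{Kovacs} that produced the constant $24=4\cdot 6$ there. Concretely, for $q=p^{2h+1}$ one covers the full interval $2\le N\le q-2$ by two overlapping completeness ranges. The large-$N$ range is the subfield/Weil-bound result of row~1 of Table~\ref{tab1}, namely completeness for $q-\frac14\sqrt{pq}+\frac{29}{16}p-3<N\le q-3$; the small-$N$ range is the engine behind Corollary~\ref{cor1_main_NRC} (Theorem 3.1 of \cite{St_1992_ComplNRC}), namely completeness for $3\le N\le q+2-t(q)$. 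First I would record that these two ranges, together with the two boundary dimensions $N=2$ (the conic, complete for odd $q$ by Segre \cite{Segre_1955}) and $N=q-2$ (complete for odd $q$ since its dual GDRS code, of the type in Proposition~\ref{prop1_equiv}, is an $[q+1,2,q]_q$ code that cannot be lengthened, there being only $q+1$ points in $\PG(1,q)$), exhaust $2\le N\le q-2$ as soon as the top of the small-$N$ range reaches the bottom of the large-$N$ range.

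The core computation is then to turn the overlap inequality
\begin{equation*}
 q+2-t(q)\ \ge\ q-\tfrac14\sqrt{pq}+\tfrac{29}{16}p-3
\end{equation*}
into \eqref{eq1_q^odd}. I would insert $t(q)<\Theta(q)$ and bound $\Theta(q)$ by $c\sqrt{q\ln q}$ using the clause of \eqref{eq1_Theta} appropriate to the field in question, then substitute $\ln q=(2h+1)\ln p$, $\sqrt q=p^{\,h+0.5}$ and $\sqrt{pq}=p^{\,h+1}$. Dividing the result by $p^{\,h+0.5}$ and multiplying by $4$ collapses it, after cancellation, exactly to $\sqrt p>4c\sqrt{(2h+1)\ln p}+\frac{29}{4p^{\,h-0.5}}-\frac{20}{p^{\,h+0.5}}$, i.e.\ to \eqref{eq1_q^odd}; the coefficient $4c$ simply replaces Storme's $4\cdot 6=24$. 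Defining $p_0(h)$ as the smallest odd prime meeting this inequality then yields completeness for every admissible $N$ whenever $p\ge p_0(h)$.

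What remains, and what I expect to be the delicate part, is the bookkeeping of the constant $c$. For each admissible $h$ one must check that the prime $p_0(h)$ produced by \eqref{eq1_q^odd} lands in the range for which the chosen clause of \eqref{eq1_Theta} is the valid and sharpest available bound for the fields entering the construction; this self-consistency (a mild bootstrap) is precisely what forces the thresholds $6\le h\le 19$, $20\le h\le 28$, $h\ge 29$ attached to $c=1.62,\,1.635,\,1.835$, the largest value $1.835$ being the globally valid one. For the five small values $1\le h\le 5$ the asymptotic form overshoots, so I would bypass \eqref{eq1_q^odd} and instead solve the overlap inequality directly with the exact expression $\Phi(q)$ of \eqref{eq1_Phi} in place of $c\sqrt{q\ln q}$, certifying numerically that the least admissible primes are $p_0(1)=757,\ldots,p_0(5)=3623$. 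The main obstacle is thus not a single hard estimate but the combination of (i) getting this constant-versus-range consistency right uniformly in $h$ and (ii) the direct numerical verification of the five exceptional thresholds.
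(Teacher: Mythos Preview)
Your overall scheme coincides with the paper's: rerun the argument of \cite[Theorem~3.5]{St_1992_ComplNRC} with the constant $6$ replaced by a variable $c$, obtain \eqref{eq1_q^odd}, and then pin down $c$ range by range via a bootstrap. Your account of the bootstrap for $h\ge 6$ (the thresholds at $h=19$ and $h=28$ coming from where the resulting $p_0(h)$ crosses $17041$ and $33013$) matches what the paper records.

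The gap is in your treatment of $1\le h\le 5$. You propose to plug $\Phi(q)$ straight into the overlap inequality and assert that this certifies $p_0(1)=757,\dots,p_0(5)=3623$; it does not. For $h=1$, $p=757$, $q=757^3$ one finds $\Phi(q)\approx 1.69\times 10^{5}$, whereas the overlap requires $t(q)\le\tfrac14\sqrt{pq}-\tfrac{29}{16}p+5\approx 1.42\times 10^{5}$; using $\Phi$ the smallest admissible prime for $h=1$ comes out somewhere around $1100$--$1200$, well above $757$. The paper reaches the stated values by a \emph{second} bootstrap, this time against the computer-assisted constants of Theorem~\ref{th4_computBnd}: it first solves \eqref{eq1_q^odd} with $c=1.62$, obtaining preliminary thresholds $877,1543,2273,3037,3821$ for $h=1,\dots,5$; then observes that each of these lies in the validity range of a sharper clause among \eqref{eq4_1525}--\eqref{eq4_1585} (for instance $877\le 887$ licenses $c=1.525$); and finally re-solves \eqref{eq1_q^odd} with $c=1.525,\,1.548,\,1.572,\,1.585,\,1.585$ to obtain $757,1399,2129,2887,3623$. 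So the small-$h$ numbers hinge on the computer bounds of Theorem~\ref{th4_computBnd}, not on $\Phi$.
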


\begin{remark}
   In \eqref{eq1_q^odd}, the term $\frac{29}{4\widehat{p}^{\,\,h-0.5}}-\frac{20}{\widehat{p}^{\,\,h+0.5}}$ quickly decreases when $h$ grows.
   Therefore, practically, use of inequality\,\, $\widehat{p}>16c^2(2h+1)\ln \widehat{p}$\,\,
gives the same result as for~\eqref{eq1_q^odd}. In particular, we have checked this for $h\le16$.
\end{remark}

In Section \ref{sec_upper bound} we consider also implicit upper bounds on $t(q)$.

All bounds on $t(q)$ obtained in this paper are better than the bound of \eqref{eq1_Kovacs}.

Corollaries  \ref{cor1_main_NRC} and \ref{cor1_q^odd_NRC} extend regions of pairs $(N,q)$ for which it is proved that every normal rational curve in $\PG(N,q)$ is a complete $(q+1)$-arc.

Corollary  \ref{cor1_main_NRC} improves the  results of \cite[Theorem 9.2]{KorStSz_1997_SpFilSubNRC}, cf. \eqref{eq1_new_copl} and rows 10--11 of Table \ref{tab1}; in \eqref{eq1_new_copl} the region on $N$ values is greater  by $\thicksim 0.8\sqrt{q\ln q}$.

  Corollary \ref{cor1_q^odd_NRC} gives essentially smaller values $p_0(h)$ than \cite[Theorem 3.5]{St_1992_ComplNRC}. By  Corollary \ref{cor1_q^odd_NRC}, we have
$\{p_0(1),p_0(2),\ldots,p_0(16)\}=\{757,1399,2129,2887,3623,4621,5417,6247,$ $ 7079,
  7919,8779,9629,10499,11383,12253,13147\}$.
For comparison, \cite[Theorem 3.5]{St_1992_ComplNRC}, see row~9 of Table \ref{tab1}, provides
$\{p_0(1),p_0(2),\ldots,p_0(16)\}=\{16831,29663,43037,56747,70769,$ $85009, 99431,
   114031, 128767, 143651,158647,173741,188953,204251,219629, 235091\}$.
\section{The number of new covered points in one step of a step-by-step algorithm constructing AC-subsets}\label{sec_alg}

 Assume that  an AC-subset  is constructed by a step-by-step algorithm
(Algorithm, for short) adding a new point to the subset on
every step. As an example, we mention the greedy algorithm that
on every step adds to the subset a point providing the maximal
possible (for the given step) number of new covered points.

Let $w>0$ be a fixed integer. Consider the $(w+1)$st step of Algorithm.
This step  starts from a $w$-subset $\K_{w}\subset \C$ constructed in the previous
$w$ steps.
Let $\U(\K_{w})$ be the subset of points
of $\M_q$ not covered by the subset $\K_{w}$.

Let the subset $\K_{w}$
consist of $w$ points $A_{1},A_{2},\ldots ,A_{w}$.
Let $A_{w+1}\in \C\setminus \K_{w}$ be the point that
will be included into the subset in the $(w+1)$st step.
Denote by
$\U(\K_{w}\cup \{A_{w+1}\})$ the subset of points of $\M_q$ not
covered by the new subset $\K_{w}\cup \{A_{w+1}\}$.

Let $\overline{AB}$ be the line through points $A$ and $B$. The point $A_{w+1}$ defines a bundle $\B(A_{w+1})=\{\overline{A_{1}A_{w+1}},\overline{A_{2}A_{w+1}},\ldots,\overline{A_{w}A_{w+1}}\}$ of
$w$ tangents (unisecants)  to $\K_{w}$ which are bisecants of $\C$.
In order to obtain the next subset
$\K_{w+1}$, we may include to $\K_{w}$ any of $q+1-w$ points of $\C\setminus \K_{w}$.
So, there exist $q+1-w$ distinct points $A_{w+1}$ and $q+1-w$ distinct bundles.  Introduce the set of $w(q+1-w)$ lines
\begin{equation*}
 \B_{w+1}^{\,\cup} =\bigcup_{A_{w+1}\in \C\setminus \K_{w}}\B(A_{w+1}).
\end{equation*}
Let $P_{w+1}^{\,\cup}$ be the point multiset consisting of all points of $\B_{w+1}^{\,\cup}.$ A point that is the intersection of $m$ lines of $\B_{w+1}^{\,\cup}$ has multiplicity $m$ in $P_{w+1}^{\,\cup}$.

Let $\Delta(A_{w+1})$ be the number of the \emph{new covered} points in the $(w+1)$st step. Denote by $\N(A_{w+1})$ the set of \emph{new} points \emph{covered} by $\K_{w}\cup \{A_{w+1}\}$. By definition,
\begin{align*}
&\N(A_{w+1})=\U(\K_{w})\setminus\U(\K_{w}\cup \{A_{w+1}\}),\notag\\
&\Delta(A_{w+1})=\#\N(A_{w+1})=\#\U(\K_{w})-\#\U
(\K_{w}\cup \{A_{w+1}\}).  \label{eq2_Delta_w}
\end{align*}
 Introduce the point multiset
\begin{equation*}
  \N_{w+1}^{\,\cup} =\bigcup_{A_{w+1}\in \C\setminus \K_{w}}\N(A_{w+1})\subset P_{w+1}^{\,\cup}.
\end{equation*}
By the definitions above,
\begin{equation*}
     \#\N^{\,\cup}_{w+1}=\sum_{A_{w+1}\in \C\setminus \K_{w}}\Delta(A_{w+1}).
\end{equation*}

Let $P\in \U(\K_{w})\subset \M_q$ be a point  not covered by $\K_{w}$. Every point of $\M_q$ lies at most on two tangents of $\C$. The rest of lines through this point and the points of $\C$ are bisecants. Therefore, among the $w$ lines connecting $P$ with $\K_{w}$ there are at least $w-2$ bisecants of $\C$. None of those bisecants is a bisecant of $\K_{w}$ otherwise the point $P$ would be covered. Hence, all bisecants of $\C$ through $P$ and $\K_{w}$ belong to $\B_{w+1}^{\,\cup}$.
It means that
\emph{every point of $\U(\K_{w})$ is included in $\N^{\,\cup}_{w+1}$ at least $w-2$ times}. So,
\begin{equation}\label{eq2_card_multis}
    \#\N^{\,\cup}_{w+1}\ge (w-2)\cdot\#\U(\K_{w}).
\end{equation}

\begin{remark} For even $q$, every point of $\M_q$ lies  on one tangent of $\C$. Therefore for even $q$, in relation \eqref{eq2_card_multis}
we may change $w-2$ by $w-1$. Also, for odd $q$, an internal point does not belong to any tangent of a conic whereas each of the $\frac{1}{2}q(q+1)$
external points lies on two distinct tangents. Hence for odd $q$, in \eqref{eq2_card_multis} we may change $(w-2)\cdot\#\U(\K_{w})$ by $(w-2)\cdot\#\U(\K_{w})+2\max\{0,\#\U(\K_{w})-\frac{1}{2}q(q+1)\}$. These changes could slightly improve estimates below. However, for simplicity of presentation, we save relation \eqref{eq2_card_multis} as it is.
\end{remark}

By the above, the average number, say $\Delta _{w+1}^{\text{aver}}$, of new covered points in a bundle in the $(w+1)$st step is as follows
\begin{equation*}\label{eq_aver_def}
    \Delta _{w+1}^{\text{aver}}=\frac{\sum\limits_{A_{w+1}\in \C\setminus \K_{w}}\Delta (A_{w+1})}{q+1-w}\ge\frac{(w-2)\cdot\#\U(\K_{w})}{q+1-w}.
\end{equation*}

Clearly,
\begin{equation*}\label{eq2_max>=aver}
   \max\limits_{A_{w+1}\in\, \C\setminus \K_{w}}\Delta(A_{w+1})\ge\left\lceil\Delta _{w+1}^{\text{aver}}\right\rceil.
\end{equation*}
So, we have proved the following lemma.
\begin{lemma}
    For an arbitrary step-by-step algorithm, there exists a point $A_{w+1}$ providing
\begin{equation}\label{eq2_>=aver}
     \Delta(A_{w+1})\ge\left\lceil\frac{(w-2)\cdot\#\U(\K_{w})}{q+1-w}\right\rceil.
\end{equation}
\end{lemma}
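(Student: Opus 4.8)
The plan is to prove the bound by a straightforward averaging (pigeonhole) argument over the $q+1-w$ candidate points, using the multiset lower bound already established in \eqref{eq2_card_multis}. First I would rewrite the total number of new covered points, summed over all admissible choices of $A_{w+1}\in\C\setminus\K_{w}$, as the cardinality of the multiset $\N_{w+1}^{\,\cup}$; by the definitions preceding the lemma this equals $\sum_{A_{w+1}\in\C\setminus\K_{w}}\Delta(A_{w+1})$. The key geometric input is the inequality $\#\N_{w+1}^{\,\cup}\ge(w-2)\cdot\#\U(\K_{w})$, which holds because every uncovered point $P\in\U(\K_{w})$ lies on at least $w-2$ bisecants of $\C$ joining it to $\K_{w}$; each such bisecant meets $\C$ in a second point $A_{w+1}\in\C\setminus\K_{w}$ (it cannot lie in $\K_{w}$, else $P$ would already be covered), and adding that $A_{w+1}$ makes the line a bisecant of $\K_{w}\cup\{A_{w+1}\}$ through $P$, so $P\in\N(A_{w+1})$. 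Hence $P$ is counted at least $w-2$ times in $\N_{w+1}^{\,\cup}$.

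Next I would introduce the average number of new covered points per bundle, $\Delta_{w+1}^{\text{aver}}=\#\N_{w+1}^{\,\cup}/(q+1-w)$, and combine the two preceding facts to obtain $\Delta_{w+1}^{\text{aver}}\ge(w-2)\cdot\#\U(\K_{w})/(q+1-w)$. Since the maximum of a finite collection of reals is at least their mean, there exists a candidate $A_{w+1}$ with $\Delta(A_{w+1})\ge\Delta_{w+1}^{\text{aver}}$. Finally, because each $\Delta(A_{w+1})$ is a nonnegative integer, this maximum is in fact at least the ceiling of the average; combined with the monotonicity of $x\mapsto\lceil x\rceil$ this yields $\Delta(A_{w+1})\ge\big\lceil(w-2)\cdot\#\U(\K_{w})/(q+1-w)\big\rceil$, which is exactly \eqref{eq2_>=aver}.

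I do not expect any serious obstacle here: all the genuinely geometric content — that each uncovered point is over-counted at least $w-2$ times in $\N_{w+1}^{\,\cup}$ — has already been extracted in establishing \eqref{eq2_card_multis}, so the lemma reduces to counting. The only point meriting a moment of care is the integrality step: one must first pass from $\max\ge\text{average}$ to $\max\ge\lceil\text{average}\rceil$ using that the $\Delta(A_{w+1})$ are integers, and only then invoke monotonicity of the ceiling, rather than rounding the right-hand side of \eqref{eq2_card_multis} prematurely, so that no unnecessary loss is incurred in the bound.
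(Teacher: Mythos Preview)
Your proposal is correct and follows essentially the same averaging argument as the paper: the paper likewise sums $\Delta(A_{w+1})$ over all $A_{w+1}\in\C\setminus\K_w$ to get $\#\N_{w+1}^{\,\cup}$, invokes the bound \eqref{eq2_card_multis}, forms the average $\Delta_{w+1}^{\text{aver}}$, and then uses $\max\ge\lceil\text{average}\rceil$ together with monotonicity of the ceiling. Your treatment of the integrality step is, if anything, slightly more explicit than the paper's.
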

Note that the greedy algorithm always finds the  point $A_{w+1}$ with property \eqref{eq2_>=aver}.

\section{Upper bounds on the smallest size of an AC-subset based on properties of step-by-step algorithms}\label{sec_upper bound}
We denote
\begin{align*}
   t^*(q)=\frac{t(q)}{\sqrt{q\ln q}}\,\,.
\end{align*}
Let $t(q)<f(q)$. Then $t^*(q)<f(q)/\sqrt{q\ln q}$. The upper bounds on $t^*(q)$ are more convenient for graphical representation than bounds on $t(q)$.
If $f(q)$ is called ``Bound L'', say, then we call $f(q)/\sqrt{q\ln q}$ ``Bound L*''.
\subsection{Implicit bound A}
By Section \ref{sec_alg},
\begin{align}\label{eq3_|U_w+1|}
\#\U(\K_{w}\cup \{A_{w+1}\})=\#\U(\K_{w})-\Delta(A_{w+1})
\le \#\U(\K_{w})-\left\lceil\frac{(w-2)\cdot\#\U(\K_{w})}{q+1-w}\right\rceil.
\end{align}
Define $U_w$ as an upper bound on $\#\U(\K_{w})$:
\begin{align}\label{eq3_|U_w|<U_w}
\#\U(\K_{w})=U_w-\delta\le U_w;\quad \delta\ge0.
\end{align}
 By \eqref{eq3_|U_w+1|}, \eqref{eq3_|U_w|<U_w},
\begin{align*}
&\#\U(\K_{w}\cup \{A_{w+1}\})
\le U_{w}-\delta-\left\lceil\frac{(w-2)(U_{w}-\delta)}{q+1-w}\right\rceil=\\
&U_{w}-\left\lceil\frac{(w-2)U_{w}+(q+3-2w)\delta}{q+1-w}\right\rceil.
\end{align*}

From now on, we suppose
\begin{equation}\label{eq3_condition}
    q+3>2w.
\end{equation}


Under condition \eqref{eq3_condition}, it holds that
\begin{equation}\label{eq3_|U_w+1|final}
    \#\U(\K_{w}\cup \{A_{w+1}\})=\#\U(\K_{w})-\Delta(A_{w+1})\le U_w-\left\lceil\frac{(w-2)U_{w}}{q+1-w}\right\rceil.
\end{equation}

Assume that there exists a $w_0$-subset $\K_{w_{0}}\subset\C\subset PG(2,q)$ that does not cover at most
$U_{w_{0}}$ points of $\M_q$. Then, starting from values $w_0$ and $U_{w_{0}}$, one can iteratively apply
the relation \eqref{eq3_|U_w+1|final} and obtain eventually $\#\U(\K_{w}\cup\{A_{w+1}\})=0$ for some $w$, say $w_\text{fin}$.
Clearly, $w_\text{fin}$ depends on $w_0$ and $U_{w_{0}}$, i.e. we have a function $w_\text{fin}(w_0,U_{w_0})$.
The size $k$ of the obtained AC-subset is as follows:
\begin{equation*}
k= w_\text{fin}(w_0,U_{w_0})+1  \text{ under condition }\#\U(\K_{w_\text{fin}(w_0,U_{w_0})}\cup\{A_{w_\text{fin}(w_0,U_{w_0})+1}\})=0.
\label{eq3_k=w+1}
\end{equation*}
From the above we have the following theorem.
\begin{theorem}
\label{th3_bndA} \textbf{\emph{(}implicit bound $A(w_0,U_{w_0})$\emph{)}} Let the values $w_0$, $U_{w_{0}}$, and $w_\text{fin}(w_0,U_{w_0})$ be defined
and calculated as above. Let also $w_\text{fin}(w_0,U_{w_0})<\frac{q+3}{2}$. Then it holds
that
\begin{equation*}
t(q)\leq w_\text{fin}(w_0,U_{w_0})+1. \label{eq3_boundA}
\end{equation*}
\end{theorem}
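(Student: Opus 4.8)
The plan is to convert the one-step estimate \eqref{eq3_|U_w+1|final} into an actual construction by induction on the step index $w$, carrying throughout the invariant that there exists a $w$-subset $\K_{w}\subset\C$ with $\#\U(\K_{w})\le U_{w}$, where $U_{w}$ is the integer sequence generated from the initial value $U_{w_0}$ by the recurrence $U_{w+1}=U_{w}-\left\lceil\frac{(w-2)U_{w}}{q+1-w}\right\rceil$ that implicitly defines $w_\text{fin}(w_0,U_{w_0})$. Thus the theorem is essentially a matter of running the recursion and checking that it stays legitimate until termination.

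First I would record the base case: by the hypothesis recalled just before the theorem there is a $w_0$-subset $\K_{w_0}\subset\C$ with $\#\U(\K_{w_0})\le U_{w_0}$, which is precisely the invariant at $w=w_0$. For the inductive step, assume the invariant holds at some $w$ with $w_0\le w<w_\text{fin}(w_0,U_{w_0})$. Since $w\le w_\text{fin}(w_0,U_{w_0})<\frac{q+3}{2}$ by the standing hypothesis, condition \eqref{eq3_condition} is satisfied, so \eqref{eq3_|U_w+1|final} is available; combining it with the Lemma, which guarantees that the greedy (or any) step-by-step algorithm can select a point $A_{w+1}$ attaining the average lower bound on new covered points, I obtain a $(w+1)$-subset $\K_{w+1}=\K_{w}\cup\{A_{w+1}\}$ with $\#\U(\K_{w+1})\le U_{w+1}$. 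This advances the invariant by one step, and the decisive point is that condition \eqref{eq3_condition} is in force at \emph{every} $w$ in the range $[w_0,w_\text{fin}(w_0,U_{w_0})]$ exactly because of the assumed bound $w_\text{fin}(w_0,U_{w_0})<\frac{q+3}{2}$.

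Iterating up to $w=w_\text{fin}(w_0,U_{w_0})$ and applying the step once more yields, by the very definition of $w_\text{fin}$, a subset $\K_{w_\text{fin}}\cup\{A_{w_\text{fin}+1}\}$ with $\#\U(\K_{w_\text{fin}}\cup\{A_{w_\text{fin}+1}\})=0$, i.e.\ one covering every point of $\M_q$. Its cardinality is $w_\text{fin}(w_0,U_{w_0})+1$, and the size estimate $w_\text{fin}(w_0,U_{w_0})+1<\frac{q+5}{2}<q+1$ shows it is a proper subset of $\C$, hence a genuine AC-subset, giving $t(q)\le w_\text{fin}(w_0,U_{w_0})+1$. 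I expect the only delicate bookkeeping to be the uniform validity of condition \eqref{eq3_condition} along the whole iteration rather than merely at its endpoints, together with justifying the terminal equality $\#\U=0$: the latter is legitimate because $\#\U$ is a non-negative integer dominated by the integer bound $U_{w_\text{fin}+1}\le 0$, forcing $\#\U=0$.
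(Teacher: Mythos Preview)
Your argument is correct and coincides with the paper's approach: the paper offers no separate proof but simply writes ``From the above we have the following theorem,'' referring to the preceding paragraph where one ``iteratively applies relation \eqref{eq3_|U_w+1|final}'' starting from $(w_0,U_{w_0})$---precisely the induction you spell out. You are in fact more careful than the paper in checking that condition \eqref{eq3_condition} holds at every intermediate step and that the resulting subset is proper in $\C$.
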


It is easily seen that, for any $q$, there exists a $5$-subset $\K_5\subset\C\subset PG(2,q)$ that does not cover
$\#\U(\K_{5})=\#\M_q-(10q-25)\le U_{5}=(q-5)^2$ points of $\M_q$. The corresponding implicit bound $A^*(5,(q-5)^2)$ (i.e. the value
$(w_\text{fin}(5,(q-5)^2)+1)/\sqrt{q\ln q}$\,) is shown by the third blue curve on Figs. \ref{fig_1} and \ref{fig_2}.
\begin{figure}[htbp]
\includegraphics[width=\textwidth]{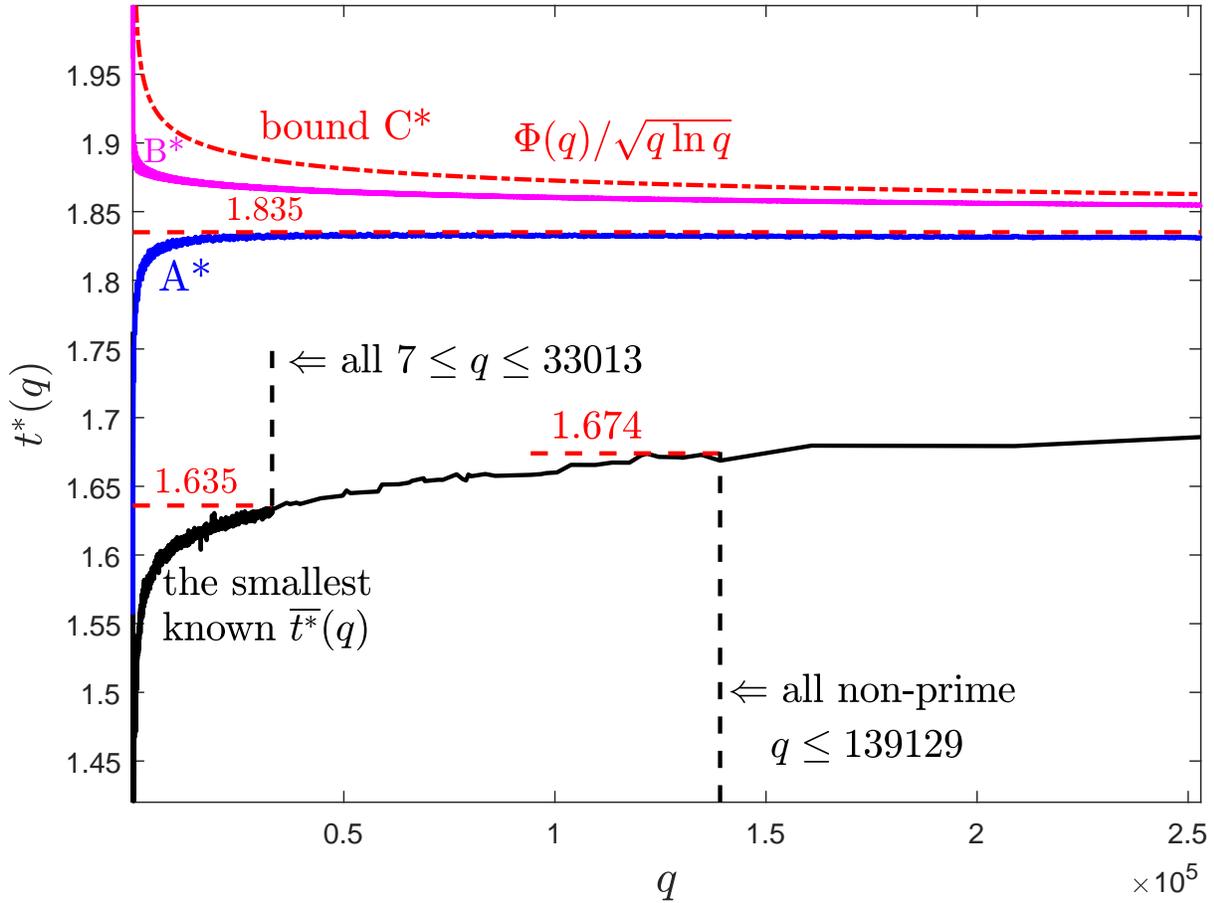}
\caption{\textbf{Upper bounds on sizes
 of AC-subsets divided by $\sqrt{q\ln q}$, $q\le253009$:} bound~C* equal to $\Phi(q)/\sqrt{q\ln q}$ (\emph{top dashed-dotted red curve});
 implicit bound B* (\emph{the 2-nd magenta curve}); bound \eqref{eq3_1835} (\emph{dashed red line $y=1.835$});
 implicit bound $A^*(5,(q-5)^2)$ (\emph{the 3-rd blue curve});
  bound \eqref{eq4_1638} (\emph{dashed red line $y=1.635$}); bound \eqref{eq4_1673} (\emph{dashed red line $y=1.674$}); the smallest known sizes of AC-subsets divided by $\sqrt{q\ln q}$, i.e. values $\overline{t^*}(q)$ (\emph{bottom black curve}). Vertical dashed lines $x=33013$ and $x=139129$ mark regions of complete computer search, respectively, for all  prime powers $q$ and all non-prime $q$'s}\label{fig_1}
\end{figure}
\begin{figure}[htbp]
\includegraphics[width=\textwidth]{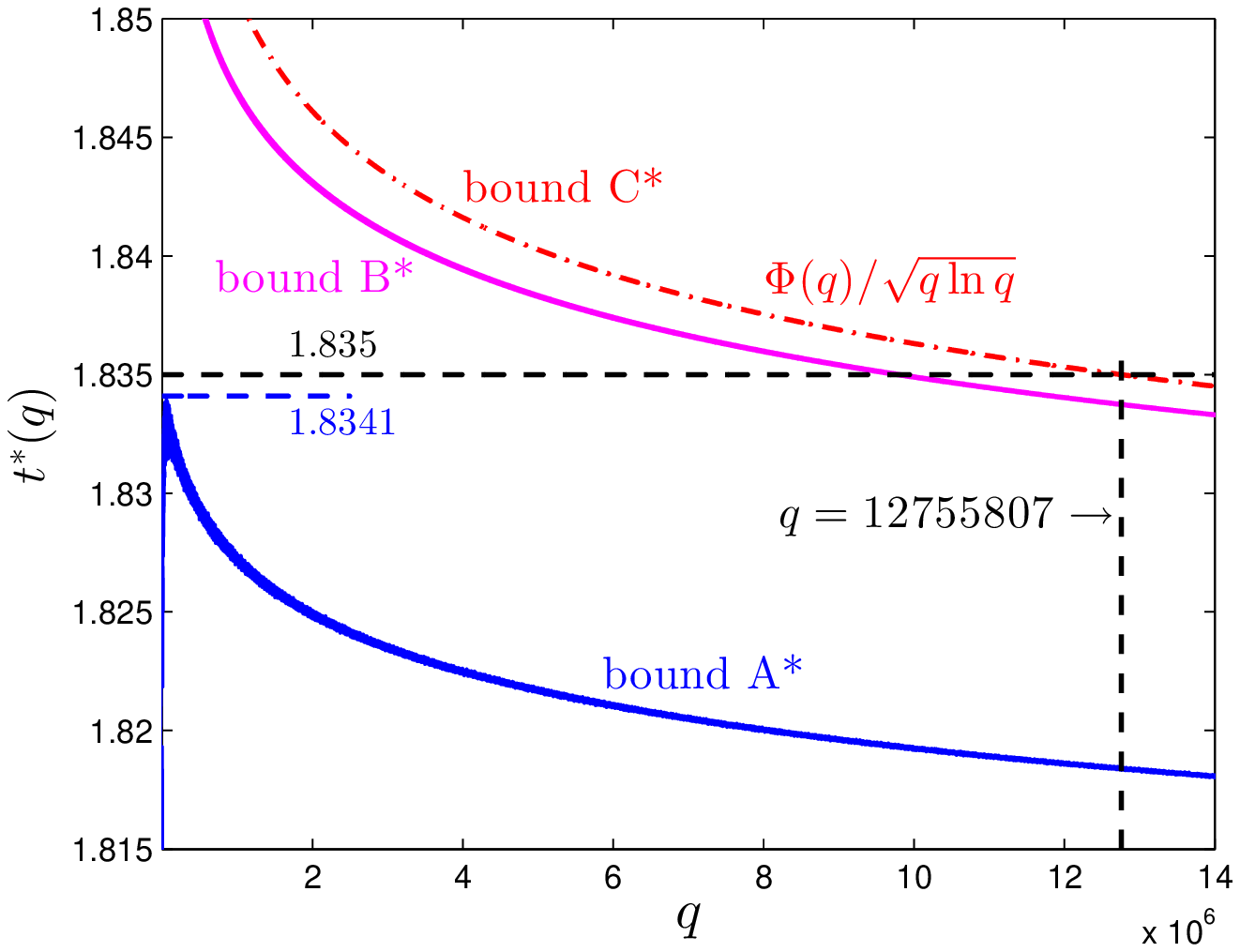}
\caption{\textbf{Upper bounds on sizes
 of AC-subsets divided by $\sqrt{q\ln q}$, $q\le14000029$:} bound~C* equal to $\Phi(q)/\sqrt{q\ln q}$ (\emph{top dashed-dotted red curve});
 implicit bound B* (\emph{the 2-nd magenta curve}); bound \eqref{eq3_1835} (\emph{dashed red line $y=1.835$});
 implicit bound $A^*(5,(q-5)^2)$ (\emph{the 3-rd blue curve})}\label{fig_2}
\end{figure}
\begin{observation}\label{observ3}
    In the region $7\le q\le 55711$ the implicit bound $A^*(5,(q-5)^2)$ tends to increase with the maximal value $A^*(5,(q-5)^2)\thicksim1.8341$ for $q=55711$.
    In the region $55711< q\le 14000029$ the  bound $A^*(5,(q-5)^2)$ tends to decrease with the minimal value $A^*(5,(q-5)^2)\thicksim1.8180$ for $q=13995829$, see Fig. \emph{\ref{fig_2}}.
\end{observation}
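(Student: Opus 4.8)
The quantity in question is, for each $q$, the explicitly defined number
\begin{equation*}
A^*(5,(q-5)^2)=\frac{w_\text{fin}(5,(q-5)^2)+1}{\sqrt{q\ln q}},
\end{equation*}
where $w_\text{fin}$ is produced by the recurrence underlying \eqref{eq3_|U_w+1|final}, namely $U_5=(q-5)^2$ and $U_{w+1}=U_w-\lceil (w-2)U_w/(q+1-w)\rceil$, iterated until the first index with $U_{w+1}=0$. Since this is a concrete arithmetic function of the single integer parameter $q$, the plan is to establish the Observation by direct evaluation over the two stated ranges, supported by an analytic description of the trend that explains where the extrema come from. First I would record that every $U_w$ is a nonnegative integer and that the recurrence uses only exact integer (ceiling) arithmetic, so $w_\text{fin}(5,(q-5)^2)$ is computed with no rounding error; the only floating quantity is the normalizing factor $\sqrt{q\ln q}$.

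To understand the trend analytically I would pass to the continuous model $dU/dw=-(w-2)U/(q+1-w)$ with $U(5)=(q-5)^2$. Writing the solution as a product and expanding for $w=O(\sqrt{q\ln q})\ll q$ gives $\ln(U_w/U_5)\thicksim -w^2/(2q)$. The geometric-decay phase lasts only until the ceiling becomes active, i.e. until $(w-2)U_w/(q+1-w)\thicksim1$, which occurs at $U_w\thicksim\sqrt{q/(3\ln q)}$; matching $\ln U_w$ from $2\ln q$ down to this level forces $w\thicksim\sqrt{3q\ln q}$, after which the endgame removes essentially one uncovered point per step for a further $\thicksim\sqrt{q/(3\ln q)}$ steps. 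Thus $w_\text{fin}(5,(q-5)^2)\thicksim\sqrt{3q\ln q}+\sqrt{q/(3\ln q)}$ and $A^*(5,(q-5)^2)\to\sqrt3$, in agreement with the leading behaviour of $\Phi(q)$ in \eqref{eq1_Phi}. The positive low-order corrections (the analogues of the $\ln\ln q+\ln3$ terms of $\Phi$, together with the integer granularity of the ceiling) sit above $\sqrt3$ and decay on the scale of $1/\ln q$; this is exactly what produces an initial rise, a broad maximum, and then a very slow decline toward $\sqrt3$, so the qualitative ``tends to increase / tends to decrease'' description is forced, while the fine oscillations caused by the ceilings prevent strict monotonicity.

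The precise location and value of the extrema, however, are not delivered by the asymptotics and must be certified computationally. I would scan $q$ through $7\le q\le55711$ and through $55711<q\le14000029$, at each step running the integer recurrence to obtain $w_\text{fin}(5,(q-5)^2)$ and maintaining the running maximum on the first range and the running minimum on the second, together with the arguments attaining them. To turn the floating comparisons into rigorous statements I would avoid evaluating $A^*$ directly and instead compare the integer $(w_\text{fin}(5,(q-5)^2)+1)^2$ against $c^2q\ln q$ using rigorous enclosures of $\ln q$; this certifies inequalities such as $A^*(5,(q-5)^2)<1.8341$ on the first range, and the corresponding bound on the second, without round-off ambiguity.

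The main obstacle is scale combined with the demand for rigor. Because the ceilings make the function genuinely non-monotone, ``tends to increase'' and ``tends to decrease'' are statements about the envelope, and the claimed extremizers $q=55711$ and $q=13995829$ can only be certified by an exhaustive sweep rather than by sampling: one must run the recurrence for on the order of $1.4\times10^{7}$ values of $q$, each requiring about $\sqrt{3q\ln q}\thicksim10^{4}$ iterations, while keeping the arithmetic exact and the enclosure of $\sqrt{q\ln q}$ rigorous. Organizing this cost (for instance by noting that consecutive $q$ give slowly varying $w_\text{fin}$, so the sweep can use early termination once the running extremum is safely beaten) and checking throughout that condition \eqref{eq3_condition}, in the form $w_\text{fin}<(q+3)/2$, holds so that Theorem~\ref{th3_bndA} genuinely applies, are the steps that require the most care.
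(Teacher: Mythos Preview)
Your plan is sound and, in substance, is exactly what the paper relies on: the Observation is not proved in the text at all but is simply reported as an empirical fact read off from the computed values of $A^*(5,(q-5)^2)$ (cf.\ Fig.~\ref{fig_2} and the use of the $1.8341$ value in the proof of Theorem~\ref{th3_1835}). So at the level of ``approach'' you and the paper agree---direct computation of the integer recurrence for every $q$ in the stated ranges.

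Where you go further is in two respects. First, you supply an analytic heuristic (the ODE model $dU/dw=-(w-2)U/(q+1-w)$ and the matching argument at the ceiling threshold) that explains why $A^*\to\sqrt3$ and why one expects a broad maximum followed by slow decline; the paper offers no such explanation and treats the shape of the curve purely as a numerical fact. Second, you insist on certified arithmetic (exact integers for $w_\text{fin}$, interval enclosures for $\sqrt{q\ln q}$) and on an exhaustive sweep to pin down the extremizers $q=55711$ and $q=13995829$; the paper makes no claim of rigor here and is content with ordinary floating-point evaluation. Both additions are genuine improvements over the paper's bare statement, though note that neither the paper nor you settle whether the sweep is over all integers $q$ or only prime powers; the recurrence makes sense for any integer, and the paper's wording is ambiguous on this point.
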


\subsection{A truncated iterative process}
From \eqref{eq3_|U_w+1|} we have that
\begin{align}\label{eq3_U_w+1-lrceil}
&\#\U(\K_{w}\cup \{A_{w+1}\})=\#\U(\K_{w})-\Delta(A_{w+1})
\le U_w\left( 1-\frac{w-2}{q+1-w}\right).
\end{align}
Clearly, $\#\U(\K_{1})\le U_1= q^2.$ Using \eqref{eq3_U_w+1-lrceil}
iteratively, we obtain
\begin{align}
\#\U(\K_{w}\cup\{A_{w+1}\})=U_{w+1}\leq q^2f_q(w)  \label{eq3_N_w+1<_},
\end{align}
where
\begin{align}\label{eq3_fq_main}
f_q(w)=\prod_{i=1}^{w}\left( 1-\frac{i-2}{q+1-i}\right).
\end{align}

From now on, we will stop the iterative process when $\#\U(\K_{w}\cup\{A_{w+1}\})\leq \xi  $ where $\xi
\geq 1$ is some value that we may assign to improve estimates. Note that if some point $P\in \M_q$ is not covered by $\K_{w}\cup\{A_{w+1}\}$, one always can find a point $A_{w+2}\in\C\setminus(\K_{w}\cup\{A_{w+1}\})$ such that $P$ is covered by $\K_{w}\cup\{A_{w+1},A_{w+2}\}$. It means that after the end of the iterative process we can add at most $\xi$ points of $\C$ to the running subset in order to get a  $k$-AC-subset with size $k$  satisfying
\begin{equation}
w+1\leq k\leq w+1+\xi  \text{ under condition }\#\U(\K_{w}\cup\{A_{w+1}\})\leq \xi  .
\label{eq3_k=w+1+xi}
\end{equation}

\begin{theorem}
\label{th3_main}   Let $\xi\ge1$ be a fixed value independent of $w$. Let $w<\frac{q+3}{2}$  satisfy
\begin{equation}
f_{q}(w)=\prod_{i=1}^{w}\left( 1-\frac{i-2}{q+1-i}\right)\leq \frac{\xi }{q^2}.  \label{eq3_main}
\end{equation}
Then it holds that
\begin{equation}
t(q)\leq w+1+\xi.  \label{eq3_general bound}
\end{equation}
\end{theorem}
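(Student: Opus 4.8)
The plan is to recognize that the theorem is the assembly of the iterative estimate \eqref{eq3_N_w+1<_} with the cleanup bound \eqref{eq3_k=w+1+xi}: essentially all of the analytic work has already been done, so what remains is to check that the hypotheses line up and, crucially, that the iteration producing $f_q(w)$ is legitimate all the way up to index $w$. First I would verify that the standing hypothesis $w<\frac{q+3}{2}$ licenses the entire chain of estimates. The single-step bound \eqref{eq3_U_w+1-lrceil} passes from $\#\U(\K_i)$ to its upper bound $U_i$ by multiplying through by the factor $1-\frac{i-2}{q+1-i}$, and this is inequality-preserving only when that factor is nonnegative, i.e.\ when $q+1-i\ge i-2$, i.e.\ $q+3\ge 2i$. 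Since every index appearing in the product \eqref{eq3_fq_main} satisfies $1\le i\le w<\frac{q+3}{2}$, we get $2i\le 2w<q+3$, so condition \eqref{eq3_condition} holds at each step and the denominators $q+1-i$ are positive as well. Hence iterating \eqref{eq3_U_w+1-lrceil} from $\#\U(\K_1)\le U_1=q^2$ is valid and yields exactly \eqref{eq3_N_w+1<_}, namely $\#\U(\K_w\cup\{A_{w+1}\})\le q^2 f_q(w)$.

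Next I would feed in the hypothesis \eqref{eq3_main}. Combining $f_q(w)\le \xi/q^2$ with the displayed bound gives $\#\U(\K_w\cup\{A_{w+1}\})\le q^2\cdot\frac{\xi}{q^2}=\xi$. In words: once a step-by-step algorithm has selected the $w+1$ points $A_1,\ldots,A_{w+1}$, at each stage taking the point guaranteed by \eqref{eq2_>=aver}, at most $\xi$ points of $\M_q$ remain uncovered.

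Finally I would invoke the cleanup observation recorded before \eqref{eq3_k=w+1+xi}: any single uncovered point $P\in\M_q$ can be covered by adjoining one suitable point of $\C\setminus(\K_w\cup\{A_{w+1}\})$, and since adjoining points only enlarges the covered set (coverage is monotone, so nothing already covered is lost), the at most $\xi$ remaining uncovered points can be absorbed one at a time using at most $\xi$ additional points of $\C$. This produces a subset of $\C$ of size at most $w+1+\xi$ covering all of $\M_q$, i.e.\ an AC-subset, whence $t(q)\le w+1+\xi$.

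I expect the only genuinely delicate point to be the legitimacy of the iteration rather than its conclusion: one must be certain that the nonnegativity of each factor $1-\frac{i-2}{q+1-i}$ holds for \emph{every} $i\le w$, which is precisely where \eqref{eq3_condition}, and hence the hypothesis $w<\frac{q+3}{2}$, is consumed. A secondary check is that the resulting set of size $w+1+\xi$ is a \emph{proper} subset of $\C$, so that it is an honest AC-subset; this is automatic in the range of interest, where $w+1+\xi$ is of order $\sqrt{q\ln q}$ and thus far below $q+1$.
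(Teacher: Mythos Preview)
Your proposal is correct and follows essentially the same route as the paper's proof, which simply cites \eqref{eq3_N_w+1<_} and \eqref{eq3_k=w+1+xi} in two sentences. You are more explicit than the paper in checking that the factor $1-\frac{i-2}{q+1-i}$ stays nonnegative for every $i\le w$ (so that passing from $\#\U(\K_i)$ to its upper bound $U_i$ is legitimate at each stage), and in noting the properness of the final subset; these are reasonable clarifications but do not change the argument.
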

\begin{proof}
By \eqref{eq3_N_w+1<_}, to provide the inequality $\#\U(\K_{w}\cup\{A_{w+1}\})\leq
\xi $ it is sufficient to find $w$ such that $q^2f_{q}(w)\leq \xi $.
Now \eqref{eq3_general
bound} follows from \eqref{eq3_k=w+1+xi}.
\end{proof}

Clearly, we should choose $\xi $ such that $w+1+\xi $ is small under
condition $\#\U(\K_{w}\cup\{A_{w+1}\})\leq \xi$.

In order to get more simple forms of upper bounds on $t(q)$ we will find an upper bound on $f_q(w)$ of \eqref{eq3_fq_main}.
To this end we  use the Taylor series  $e^{-\alpha }=1-\alpha +\frac{\alpha
^{2}}{2}-\frac{\alpha ^{3}}{6}+...,$ whence
\begin{equation}\label{eq3_Taylor}
1-\alpha <e^{-\alpha }\mbox{ for }\alpha \neq 0.
\end{equation}

\subsection{Implicit bound B}
\begin{lemma}
    It holds that
    \begin{equation}\label{eq3_estim_Stef}
       f_{q}(w)=\prod_{i=1}^{w}\left( 1-\frac{i-2}{q+1-i}\right)<e^{-S},
    \end{equation}
    where
    \begin{align}\label{eq3_estim_Stef_det}
-w+(q-1)\ln\frac{q+1}{q+1-w}<S<-w+(q-1)\ln\frac{q}{q-w}.
    \end{align}
\end{lemma}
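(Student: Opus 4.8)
The plan is to bound the product factor-by-factor with the elementary inequality \eqref{eq3_Taylor} and then estimate the resulting exponent. Writing $\alpha_i=\frac{i-2}{q+1-i}$, each factor equals $1-\alpha_i=\frac{q+3-2i}{q+1-i}$, which is strictly positive for $1\le i\le w$ precisely because of the running hypothesis $w<\frac{q+3}{2}$ from \eqref{eq3_condition}; hence the factorwise inequalities may be multiplied. Since $1-\alpha<e^{-\alpha}$ for every $\alpha\neq0$ (for $\alpha<0$ this reads $1+|\alpha|<e^{|\alpha|}$), I would obtain
\begin{equation*}
f_q(w)=\prod_{i=1}^{w}(1-\alpha_i)<\prod_{i=1}^{w}e^{-\alpha_i}=e^{-S},\qquad S:=\sum_{i=1}^{w}\frac{i-2}{q+1-i}.
\end{equation*}
The single factor with $\alpha_i=0$, namely $i=2$, contributes an equality, but the factor $i=1$ has $\alpha_1=-\frac1q\neq0$ and is strict, so the product inequality is strict. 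This is the choice of $S$ for which \eqref{eq3_estim_Stef} holds, and it remains only to sandwich $S$ as in \eqref{eq3_estim_Stef_det}.

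Next I would rewrite $S$ as a shifted partial harmonic sum. Substituting $j=q+1-i$, so that $i-2=q-1-j$ and $j$ decreases from $q$ to $q+1-w$, gives
\begin{equation*}
S=\sum_{j=q+1-w}^{q}\frac{q-1-j}{j}=(q-1)\sum_{j=q+1-w}^{q}\frac1j-w=-w+(q-1)H_w,\qquad H_w:=\sum_{j=q+1-w}^{q}\frac1j.
\end{equation*}
The hypothesis $w<\frac{q+3}{2}$ guarantees $q+1-w>\frac{q-1}{2}>0$ and $q-w>\frac{q-3}{2}>0$, so all denominators occurring below are positive and the partial harmonic sum $H_w$ is well defined.

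Finally I would sandwich $H_w$ between integrals of the decreasing function $1/x$. Comparing $1/j$ with the integral of $1/x$ over $[j,j+1]$ on the lower side and over $[j-1,j]$ on the upper side, and summing over $j=q+1-w,\dots,q$, yields
\begin{equation*}
\ln\frac{q+1}{q+1-w}=\int_{q+1-w}^{q+1}\frac{dx}{x}<H_w<\int_{q-w}^{q}\frac{dx}{x}=\ln\frac{q}{q-w}.
\end{equation*}
Multiplying by $q-1>0$ and subtracting $w$ turns this into \eqref{eq3_estim_Stef_det}. I expect no genuine obstacle here; the only points needing care are preserving strictness of the product despite the equality at $i=2$ (handled by the strict factor at $i=1$) and aligning the integral endpoints so that the comparison produces exactly $\ln\frac{q+1}{q+1-w}$ and $\ln\frac{q}{q-w}$ rather than slightly looser logarithms.
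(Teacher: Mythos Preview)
Your proof is correct and follows essentially the same line as the paper's: both bound each factor by $e^{-\alpha}$ to obtain the same $S$, rewrite $S$ as $-w+(q-1)\sum_{j=q+1-w}^{q}\frac{1}{j}$, and then sandwich the partial harmonic sum between logarithms. Your direct integral comparison on the partial sum is in fact slightly cleaner than the paper's route via the global bounds $\ln(n+1)<H_n<1+\ln n$, which when simply subtracted do not immediately yield the exact endpoints $\ln\frac{q+1}{q+1-w}$ and $\ln\frac{q}{q-w}$.
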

\begin{proof}
By \eqref{eq3_Taylor},
\begin{equation*}
    \prod_{i=1}^{w}\left( 1-\frac{i-2}{q+1-i}\right)<e^{-S}, \quad S=\sum\limits_{i=1}^{w}\frac{i-2}{q+1-i}.
\end{equation*}
Also,
\begin{align*}
 & S=\sum_{i=1}^{w}\frac{i-2}{q+1-i}= \sum_{u=-1}^{w-2}\frac{u}{q-1-u}=-w+\sum_{u=-1}^{w-2}\left(\frac{u}{q-1-u}+1\right)=\notag\\
& -w+(q-1)\sum_{u=-1}^{w-2}\frac{1}{q-1-u}= -w+(q-1)\sum_{t=q+1-w}^{q}\frac{1}{t}.
\end{align*}
  It is well known that
  \begin{equation*}
    \ln(q+1)<\sum_{t=1}^{q}\frac{1}{t}<1+\ln q.
  \end{equation*}
  Therefore,
  \begin{align*}
  \ln(q+1)-\ln(q+1-w)<\sum_{t=q+1-w}^{q}\frac{1}{t}=\sum_{t=1}^{q}\frac{1}{t}-\sum_{t=1}^{q-w}\frac{1}{t}<\ln q-\ln(q-w).~~~~ \qedhere
  \end{align*}
\end{proof}
\begin{corollary}\label{cor3_impl_B_gen}
                     Let $\xi\ge1$ be a fixed value independent of $w$. Let $w<\frac{q+3}{2}$ satisfy
\begin{equation*}
w-(q-1)\ln\frac{q+1}{q+1-w}\leq \ln\frac{\xi }{q^2}.
\end{equation*}
Then it holds that
\begin{equation*}
t(q)\leq w+1+\xi.
\end{equation*}
\end{corollary}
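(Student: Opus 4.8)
The plan is to read this off directly from the preceding lemma together with Theorem~\ref{th3_main}; the corollary merely repackages the product condition \eqref{eq3_main} as an explicit, product-free inequality. The governing principle is that, in order to invoke Theorem~\ref{th3_main}, I only need to certify $f_q(w)\le \xi/q^2$, and the lemma is precisely the tool that converts this bound on a product into a statement about the single quantity $S$.

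First I would apply the lemma, which gives $f_q(w)<e^{-S}$ as in \eqref{eq3_estim_Stef}. Since I am after an \emph{upper} bound on $f_q(w)$, I need a \emph{lower} bound on $S$, so I use the left-hand inequality of \eqref{eq3_estim_Stef_det}, namely $S>-w+(q-1)\ln\frac{q+1}{q+1-w}$. (The logarithm is well defined because $w<\frac{q+3}{2}$ forces $q+1-w>0$.) Monotonicity of $x\mapsto e^{-x}$ then yields $f_q(w)<e^{-S}<\exp\!\big(w-(q-1)\ln\frac{q+1}{q+1-w}\big)$.

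Next I would feed in the corollary's hypothesis $w-(q-1)\ln\frac{q+1}{q+1-w}\le\ln\frac{\xi}{q^2}$: exponentiating both sides gives $\exp\!\big(w-(q-1)\ln\frac{q+1}{q+1-w}\big)\le\xi/q^2$, and chaining this with the previous display produces $f_q(w)<\xi/q^2$, which in particular verifies hypothesis \eqref{eq3_main}. Since $w<\frac{q+3}{2}$ is assumed, Theorem~\ref{th3_main} then applies verbatim and delivers $t(q)\le w+1+\xi$, as claimed.

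I do not anticipate a genuine obstacle: the argument is a short substitution chain, and the content is entirely carried by the already-proved lemma and theorem. The only place to be careful is orientation—choosing the \emph{lower} estimate for $S$ (equivalently, the $\frac{q+1}{q+1-w}$ branch rather than the $\frac{q}{q-w}$ branch of \eqref{eq3_estim_Stef_det}), so that the resulting bound on $f_q(w)$ points in the direction required by \eqref{eq3_main}.
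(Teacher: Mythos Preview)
Your proposal is correct and follows essentially the same approach as the paper: you substitute the lemma's estimates \eqref{eq3_estim_Stef}--\eqref{eq3_estim_Stef_det} into condition \eqref{eq3_main} of Theorem~\ref{th3_main}, using the lower branch of \eqref{eq3_estim_Stef_det} to bound $f_q(w)$ from above. The paper's own proof is the one-line ``We substitute \eqref{eq3_estim_Stef} and \eqref{eq3_estim_Stef_det} in \eqref{eq3_main},'' which your write-up simply unpacks in full.
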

\begin{proof} We substitute \eqref{eq3_estim_Stef} and \eqref{eq3_estim_Stef_det} in \eqref{eq3_main}.
\end{proof}
\begin{corollary}\label{cor3_implB} \textbf{(implicit bound B)}
                    Let $w<\frac{q+3}{2}$  satisfy
\begin{equation*}
w-(q-1)\ln\frac{q+1}{q+1-w}\leq \ln\frac{1}{q\sqrt{3q\ln q}}.
\end{equation*}
Then it holds that
\begin{equation*}
t(q)\leq w+1+\sqrt{\frac{q}{3\ln q}}.
\end{equation*}
\end{corollary}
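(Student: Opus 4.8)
The plan is to observe that implicit bound B is nothing but the specialization of the general Corollary \ref{cor3_impl_B_gen} to the particular weight $\xi = \sqrt{q/(3\ln q)}$. Under this identification the entire argument collapses to two verifications: that this value of $\xi$ is admissible (i.e. $\xi \ge 1$, as required by Corollary \ref{cor3_impl_B_gen}), and that the quantity $\ln(\xi/q^2)$ appearing on the right of the hypothesis of Corollary \ref{cor3_impl_B_gen} reduces exactly to the displayed expression $\ln\frac{1}{q\sqrt{3q\ln q}}$. The side condition $w<\frac{q+3}{2}$ and the additive structure of the conclusion carry over verbatim, so no new analysis of the iterative process is needed.

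First I would check admissibility. Since Corollary \ref{cor3_impl_B_gen} assumes $\xi \ge 1$, I must confirm $\sqrt{q/(3\ln q)} \ge 1$, equivalently $q \ge 3\ln q$, i.e. $q/\ln q \ge 3$. The function $q/\ln q$ is increasing for $q>e$ and exceeds $3$ once $q \ge 5$; since the bounds of this paper are stated for $q\ge 5$ anyway, the choice $\xi = \sqrt{q/(3\ln q)}$ is a legitimate value independent of $w$ throughout the range of interest. Next I would carry out the logarithmic bookkeeping. Writing $\xi = q^{1/2}(3\ln q)^{-1/2}$ and expanding,
\[ \ln\frac{\xi}{q^2} = \ln\xi - 2\ln q = \frac{1}{2}\ln q - \frac{1}{2}\ln(3\ln q) - 2\ln q = -\frac{3}{2}\ln q - \frac{1}{2}\ln 3 - \frac{1}{2}\ln\ln q. \]
Expanding the target right-hand side in the same way,
\[ \ln\frac{1}{q\sqrt{3q\ln q}} = -\ln q - \frac{1}{2}\ln(3q\ln q) = -\ln q - \frac{1}{2}\big(\ln 3 + \ln q + \ln\ln q\big) = -\frac{3}{2}\ln q - \frac{1}{2}\ln 3 - \frac{1}{2}\ln\ln q, \]
so the two expressions coincide. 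Consequently the hypothesis of Corollary \ref{cor3_impl_B_gen} with $\xi = \sqrt{q/(3\ln q)}$ is word-for-word the hypothesis of implicit bound B, and its conclusion $t(q)\le w+1+\xi$ becomes $t(q)\le w+1+\sqrt{q/(3\ln q)}$, as claimed.

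I do not expect any genuine obstacle here: the statement is a clean substitution into an already-established corollary, and the only place demanding care is the logarithm expansion, where one must keep the factor $3$ inside the square root and the $\ln\ln q$ term correctly accounted for. Mismanaging these factors would produce a spurious additive constant; the verification above confirms they balance exactly, which is the whole content of the proof.
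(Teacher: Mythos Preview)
Your proof is correct and follows exactly the paper's approach: substitute $\xi=\sqrt{q/(3\ln q)}$ into Corollary~\ref{cor3_impl_B_gen}. You have simply made explicit the verification that $\xi\ge1$ and that $\ln(\xi/q^{2})=\ln\frac{1}{q\sqrt{3q\ln q}}$, which the paper leaves implicit.
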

\begin{proof} In the assertions of Corollary \ref{cor3_impl_B_gen}, we use $\xi =\sqrt{\frac{q}{3\ln q}}$.
\end{proof}

The implicit bound~B*  is shown by the second magenta curve on Figs. \ref{fig_1} and \ref{fig_2}.

\subsection{Explicit bounds}

By \eqref{eq3_fq_main} and \eqref{eq3_Taylor}, we have
\begin{equation}
f_q(w)<\prod_{i=1}^{w}\left( 1-\frac{i-2}{q}\right) <\prod_{i=1}^{w}e^{-(i-2)/q}=e^{-(w^{2}-3w)/2q}<e^{-(w-2)^{2}/2q}.
\label{eq3_fqw_approx}
\end{equation}
\begin{lemma}
\label{lem3_basic}  Let $\xi\ge1$ be a fixed value independent of $w$. The value
\begin{equation}
\frac{q+3}{2}>w\geq \sqrt{2q}\sqrt{\ln \frac{q^2}{\xi }}+3  \label{eq3_w<}
\end{equation}
satisfies  inequality \eqref{eq3_main}.
\end{lemma}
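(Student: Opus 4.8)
The plan is to reduce the target inequality \eqref{eq3_main} to an exponential bound that has already been established and then simply invert it. The key ingredient is the chain of estimates \eqref{eq3_fqw_approx}, which yields $f_q(w)<e^{-(w-2)^2/2q}$. That chain is valid precisely because the factors of $f_q(w)$ are positive: since $1-\frac{i-2}{q+1-i}>0$ is equivalent to $2i<q+3$, the standing condition $w<\frac{q+3}{2}$ in \eqref{eq3_condition} guarantees positivity for every $i\le w$. So the first step is to record that, under this condition, it suffices to verify the stronger-looking but easier inequality $e^{-(w-2)^2/2q}\le\frac{\xi}{q^2}$, because \eqref{eq3_fqw_approx} then forces $f_q(w)<\frac{\xi}{q^2}$, which is \eqref{eq3_main}.

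Next I would solve this exponential inequality for $w$. Taking logarithms, $e^{-(w-2)^2/2q}\le\frac{\xi}{q^2}$ is equivalent to $\frac{(w-2)^2}{2q}\ge\ln\frac{q^2}{\xi}$, that is, to $(w-2)^2\ge 2q\ln\frac{q^2}{\xi}$. Provided $\ln\frac{q^2}{\xi}\ge 0$, both sides are nonnegative and I may take square roots, so the condition becomes $w-2\ge\sqrt{2q}\sqrt{\ln\frac{q^2}{\xi}}$, i.e. $w\ge\sqrt{2q}\sqrt{\ln\frac{q^2}{\xi}}+2$.

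Finally I would compare this with the hypothesis \eqref{eq3_w<}, which demands $w\ge\sqrt{2q}\sqrt{\ln\frac{q^2}{\xi}}+3$. This is strictly stronger than the sharp requirement just derived, the additive constant $3$ rather than $2$ providing slack. Hence the assumed lower bound on $w$ gives $w-2>\sqrt{2q}\sqrt{\ln\frac{q^2}{\xi}}\ge 0$, whence $(w-2)^2> 2q\ln\frac{q^2}{\xi}$, so $e^{-(w-2)^2/2q}\le\frac{\xi}{q^2}$, and combining with \eqref{eq3_fqw_approx} completes the argument.

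I do not expect any serious obstacle: once \eqref{eq3_fqw_approx} is available, the proof is essentially a one-line inversion of an exponential. The only points needing care are bookkeeping: verifying that the positivity required for \eqref{eq3_fqw_approx} is exactly the hypothesis $w<\frac{q+3}{2}$; checking that $\ln\frac{q^2}{\xi}\ge0$ so the square-root step is legitimate (this is implicit in the statement, since \eqref{eq3_w<} already contains $\sqrt{\ln\frac{q^2}{\xi}}$ and hence tacitly requires $\xi\le q^2$); and noting that the additive $3$ in \eqref{eq3_w<} is a deliberate over-estimate of the sharp value $2$, the surplus presumably retained to streamline the explicit bounds derived later rather than for logical necessity.
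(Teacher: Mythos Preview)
Your proposal is correct and follows essentially the same approach as the paper: reduce \eqref{eq3_main} to $e^{-(w-2)^2/2q}\le\xi/q^2$ via \eqref{eq3_fqw_approx}, then invert by logarithms and square roots. The only minor discrepancy is your explanation of the additive $3$ versus $2$: the paper attributes this extra unit to the requirement that $w$ be an integer, rather than to streamlining later bounds.
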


\begin{proof}
By (\ref{eq3_fqw_approx}), to provide (\ref{eq3_main}) it
is sufficient to find $w$ such that
\begin{equation*}
e^{-(w-2)^{2}/2q}< \frac{\xi }{q^2}.
\end{equation*}
As $w$ should be an integer, in \eqref{eq3_w<} one is added. Inequality $w<\frac{q+3}{2}$ is obvious.
\end{proof}

\begin{theorem}
\label{th3_general bound_converse}In $
PG(2,q)$ it holds that
\begin{equation}
t(q)\leq \sqrt{2q}\sqrt{\ln \frac{
q^2}{\xi }}+\xi+4 ,~~\xi \geq 1,  \label{eq3_gen bound}
\end{equation}
where $\xi $ is an arbitrarily chosen value.
\end{theorem}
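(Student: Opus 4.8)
The plan is to read the bound off directly from Theorem~\ref{th3_main} by plugging in the explicit threshold supplied by Lemma~\ref{lem3_basic}, so the only real content is choosing $w$ sharply enough that the additive constant comes out as $+4$ rather than $+5$. All of the analytic work has already been done upstream, so this will be a short deduction.

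First I would fix $\xi\ge1$ and set $w=\lceil\sqrt{2q}\sqrt{\ln(q^2/\xi)}+2\rceil$, the smallest integer meeting the continuous threshold that appears in the proof of Lemma~\ref{lem3_basic}. By construction $w\ge\sqrt{2q}\sqrt{\ln(q^2/\xi)}+2$, which forces $(w-2)^2\ge 2q\ln(q^2/\xi)$ and hence $e^{-(w-2)^2/2q}\le\xi/q^2$; combining this with the chain \eqref{eq3_fqw_approx}, namely $f_q(w)<e^{-(w-2)^2/2q}$, gives $f_q(w)<\xi/q^2$, so $w$ satisfies \eqref{eq3_main}. I would work in the regime $w<\frac{q+3}{2}$, which is exactly the hypothesis under which Theorem~\ref{th3_main} and Lemma~\ref{lem3_basic} operate; outside it the asserted bound is no better than the trivial estimate $t(q)\le q$, so there is nothing to prove there.

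Next I would exploit the rounding a little more carefully than the $+3$ form of \eqref{eq3_w<} does: since $\lceil x\rceil<x+1$ for every real $x$, the chosen $w$ satisfies the strict inequality $w<\sqrt{2q}\sqrt{\ln(q^2/\xi)}+3$. Applying Theorem~\ref{th3_main} with this $w$ and $\xi$ yields $t(q)\le w+1+\xi$, and substituting the strict upper bound on $w$ gives $t(q)<\sqrt{2q}\sqrt{\ln(q^2/\xi)}+3+1+\xi=\sqrt{2q}\sqrt{\ln(q^2/\xi)}+\xi+4$, which is exactly \eqref{eq3_gen bound}.

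I do not expect any genuine obstacle: the product estimate $f_q(w)<e^{-(w-2)^2/2q}$ and its consequence for \eqref{eq3_main} are already in \eqref{eq3_fqw_approx} and Lemma~\ref{lem3_basic}, and Theorem~\ref{th3_main} packages the passage from a small uncovered set to an AC-subset. The one point demanding care is the bookkeeping of the integer rounding: taking the ceiling of the $+2$ threshold and using $\lceil x\rceil<x+1$ is what produces the constant $+4$, whereas invoking the $+3$ bound of \eqref{eq3_w<} as a black box would cost a spurious extra unit. I would also confirm that $\xi$ may be treated as a free parameter independent of $w$, as both Theorem~\ref{th3_main} and Lemma~\ref{lem3_basic} require, which it is.
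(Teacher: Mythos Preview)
Your argument is correct and follows essentially the same route as the paper, whose proof is the one-liner ``follows from \eqref{eq3_general bound} and \eqref{eq3_w<}''. You have made explicit the integer choice $w=\lceil\sqrt{2q\ln(q^2/\xi)}+2\rceil$ and the ceiling estimate $\lceil x\rceil<x+1$ that convert the continuous threshold $+2$ into the additive constant $+4$; this is precisely the bookkeeping compressed into the remark ``one is added'' in the proof of Lemma~\ref{lem3_basic}, and your treatment of the constraint $w<(q+3)/2$ is at the same level of rigor as the paper's ``obvious''.
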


\begin{proof}
The assertion follows from \eqref{eq3_general bound} and \eqref{eq3_w<}.
\end{proof}

\begin{remark}\label{rem3_xi_deriv}
     We consider
the function of $\xi$ of the form
\begin{equation*}
\phi(\xi)=\sqrt{2q}\sqrt{\ln \frac{q^{2}}{\xi }}+\xi+4.  \label{eq2_phi}
\end{equation*}
Its derivative by $\xi$ is
\begin{equation*}
\phi'(\xi)=1-\frac{1}{\xi}\sqrt{\frac{q}{2\ln\frac{q^{2}}{\xi }}}.  \label{eq2_der phi}
\end{equation*}
Put $\phi'(\xi)=0$. Then
\begin{align} \label{eq4_xi^2}
    \xi^2=\frac{q}{4\ln q-2\ln \xi}.
\end{align}
We find $\xi$ in the form $
\xi=\sqrt{\frac{q}{c\ln q}}$. By \eqref{eq4_xi^2}, $c=3+\frac{\ln c+\ln\ln q}{\ln q}.$
For simplicity, we choose $c=3$. Then $
\xi=\sqrt{\frac{q}{3\ln q}}$ and the value
\begin{align*}
\phi'\left(\sqrt{\frac{q}{3\ln q}}\,\right)=1-\sqrt{\frac{3\ln
q}{3\ln q+\ln \ln q+\ln
3}}
\end{align*}
 is close to zero for growing $q$. Also, it is easy to check the following: $\phi'(1)<0$ if $q\ge9$,
$\phi'(\xi)$ is an increasing function,
$
0<\phi'\left(\sqrt{\frac{q}{3\ln q}}\,\right)<\phi'(\sqrt{q})=1-\sqrt{\frac{1}{3\ln q}}.
$

 So, the choice
$\xi =\sqrt{\frac{q}{3\ln q}} $ in \eqref{eq3_gen bound} seems
to be convenient.
\end{remark}

\begin{theorem}\label{th3_explicit bndC} \textbf{(Bound C)} The following upper
bound on the smallest size $t(q)$ of an AC-subset in $\PG(2,q)$
holds.
\begin{align}\label{eq3_bnd_C}
t(q)<\Phi(q)=\sqrt{q(3\ln q+\ln\ln q +\ln3)}+\sqrt{\frac{q}{3\ln q}}+4\thicksim\sqrt{3q\ln q}.
\end{align}
\end{theorem}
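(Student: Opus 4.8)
The plan is to specialize the free parameter $\xi$ in the general estimate of Theorem~\ref{th3_general bound_converse} to the near-optimal value isolated in Remark~\ref{rem3_xi_deriv}, namely $\xi=\sqrt{q/(3\ln q)}$, and then to simplify the resulting expression by an elementary logarithmic identity until it collapses to the closed form $\Phi(q)$. Before substituting I would verify admissibility: the bound \eqref{eq3_gen bound} requires $\xi\ge1$, and $\sqrt{q/(3\ln q)}\ge1$ is equivalent to $q\ge3\ln q$, which holds throughout the range of interest (in particular for all prime powers $q\ge5$, as already used when $\phi'(1)<0$ was checked for $q\ge9$ in Remark~\ref{rem3_xi_deriv}).

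The core computation is to rewrite the leading term $\sqrt{2q}\,\sqrt{\ln(q^2/\xi)}$ of \eqref{eq3_gen bound}. Taking logarithms and expanding,
\[
\ln\frac{q^2}{\xi}=2\ln q-\tfrac12\ln\frac{q}{3\ln q}=\tfrac32\ln q+\tfrac12\ln 3+\tfrac12\ln\ln q=\tfrac12\bigl(3\ln q+\ln\ln q+\ln 3\bigr),
\]
so that
\[
\sqrt{2q}\,\sqrt{\ln\frac{q^2}{\xi}}=\sqrt{2q\cdot\tfrac12\bigl(3\ln q+\ln\ln q+\ln 3\bigr)}=\sqrt{q\bigl(3\ln q+\ln\ln q+\ln 3\bigr)}.
\]
Substituting this back into \eqref{eq3_gen bound}, together with the value $\xi=\sqrt{q/(3\ln q)}$ appearing directly as the middle summand and the additive constant $4$, yields precisely the bound $t(q)<\Phi(q)$ of \eqref{eq3_bnd_C}.

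Finally, the asymptotic claim $\Phi(q)\sim\sqrt{3q\ln q}$ follows because the middle term $\sqrt{q/(3\ln q)}$ and the constant $4$ are of lower order than the first summand, and under the radical of that summand the dominant contribution is $3q\ln q$, with $\ln\ln q$ and $\ln 3$ negligible relative to $3\ln q$ as $q\to\infty$. I do not anticipate a genuine obstacle here: all the substantive work---bounding $f_q(w)$ through the exponential inequality \eqref{eq3_Taylor} and optimizing the trade-off in $\xi$---has already been carried out in Lemma~\ref{lem3_basic}, Theorem~\ref{th3_general bound_converse}, and Remark~\ref{rem3_xi_deriv}, so that the only care required is the displayed identity for $\ln(q^2/\xi)$ and the admissibility check $\xi\ge1$.
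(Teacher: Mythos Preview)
Your proposal is correct and follows exactly the paper's approach: the paper's proof is the one-liner ``We substitute $\xi=\sqrt{q/(3\ln q)}$ in \eqref{eq3_gen bound},'' and you have simply carried out that substitution explicitly, together with the admissibility check $\xi\ge1$ and the asymptotic justification, both of which the paper leaves implicit.
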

\begin{proof}
We substitute $
\xi =\sqrt{\frac{q}{3\ln q}}
$ in \eqref{eq3_gen bound}.
\end{proof}

The bound C* (i.e. the value $\Phi(q)/\sqrt{q\ln q}$\,) is shown by the top dashed-dotted red curve on Figs. \ref{fig_1} and \ref{fig_2}.
\begin{remark}\label{rem3_xi=1_xi=sqrt_q}
    If in \eqref{eq3_gen bound} we take $\xi=1$ and $\xi =\sqrt{q}$, we obtain bounds \eqref{eq3_xi=1}  and~\eqref{eq3_xi=sqrt_q}.
        \begin{equation}\label{eq3_xi=1}
            t(q)<2\sqrt{q\ln q}+5.
        \end{equation}
        \begin{equation}\label{eq3_xi=sqrt_q}
            t(q)<\sqrt{3q\ln q}+\sqrt{q}+4.
        \end{equation}
        It can be shown that bounds \eqref{eq3_xi=1}  and \eqref{eq3_xi=sqrt_q} are worse than \eqref{eq3_bnd_C}.

If we put, see Remark \ref{rem3_xi_deriv},
$ c=3+\frac{1+\ln\ln q}{\ln q}$, $\xi =\sqrt{\frac{q}{3\ln q+\ln\ln q+1}} $, we improve bound  \eqref{eq3_bnd_C}. But,  the improvement is unessential whereas the  bound takes a lengthy form.
\end{remark}

\begin{theorem}\label{th3_1835}  The following upper
bound on the smallest size $t(q)$ of an AC-subset in $\PG(2,q)$
holds.
\begin{align}\label{eq3_1835}
t(q)<1.835\sqrt{q\ln q}.
\end{align}
\end{theorem}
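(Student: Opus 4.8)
The plan is to prove the bound in two overlapping ranges, combining the analytic Bound~C of Theorem~\ref{th3_explicit bndC} for large $q$ with the computational implicit bound~$A$ of Theorem~\ref{th3_bndA} for the remaining values, and then to exploit the overlap so that every admissible $q$ is covered.

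First I would dispose of large $q$. Writing
\[
\frac{\Phi(q)}{\sqrt{q\ln q}}=\sqrt{3+\frac{\ln\ln q+\ln3}{\ln q}}+\frac{1}{\sqrt{3}\,\ln q}+\frac{4}{\sqrt{q\ln q}},
\]
one sees that the right-hand side decreases monotonically to $\sqrt{3}\approx1.732$ as $q\to\infty$. It therefore suffices to check that it drops below $1.835$ at the threshold $q=12755807$ and stays monotone beyond; this is exactly the crossover recorded in the definition of $\min\{1.835\sqrt{q\ln q},\Phi(q)\}$ accompanying \eqref{eq1_Theta}. Hence for $q\ge12755807$ Bound~C yields $t(q)<\Phi(q)\le1.835\sqrt{q\ln q}$.

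Second, for the range $7\le q\le14000029$ I would apply the implicit bound $A(5,(q-5)^2)$. For every $q$ there is a $5$-subset $\K_5\subset\C$ with $\#\U(\K_5)\le(q-5)^2$, so Theorem~\ref{th3_bndA} gives $t(q)\le w_\text{fin}(5,(q-5)^2)+1$ as long as $w_\text{fin}<\tfrac{q+3}{2}$. By Observation~\ref{observ3} the normalized quantity $A^*(5,(q-5)^2)=(w_\text{fin}(5,(q-5)^2)+1)/\sqrt{q\ln q}$ attains its maximum $\approx1.8341$ at $q=55711$ on $[7,55711]$ and decreases on $(55711,14000029]$ to $\approx1.8180$; in particular it never exceeds $1.8341<1.835$ across the whole range. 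Consequently $t(q)\le A^*(5,(q-5)^2)\,\sqrt{q\ln q}<1.835\sqrt{q\ln q}$ for all $q$ in $[7,14000029]$.

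Finally I would glue the two ranges: since $14000029>12755807$ the intervals overlap, so together they cover every prime power $q\ge7$, while the few remaining tiny cases (notably $q=5$) are settled by direct verification that $t(q)$ is bounded by $1.835\sqrt{q\ln q}$. The main obstacle is rigorously certifying the computational claim of Observation~\ref{observ3}: the iteration \eqref{eq3_|U_w+1|final} for $w_\text{fin}(5,(q-5)^2)$ carries a ceiling at each step, so the map $q\mapsto A^*(5,(q-5)^2)$ is not smooth, and establishing that its global maximum over millions of integers equals $\approx1.8341$ — located precisely at $q=55711$ — requires either exhaustive evaluation of the recursion for each $q$ or a careful monotonicity-plus-error analysis controlling the discrete peak. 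Confirming that this peak stays strictly below the target constant $1.835$ is the crux of the argument.
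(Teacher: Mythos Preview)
Your proposal is correct and follows essentially the same approach as the paper: split at the threshold $q=12755807$, handle large $q$ by the monotonicity of $\Phi(q)/\sqrt{q\ln q}$ (Bound~C), and handle smaller $q$ by the computer-verified implicit bound $A(5,(q-5)^2)$ recorded in Observation~\ref{observ3}. The paper's proof is terser---it simply states the computer check and the monotonicity---while you spell out the decomposition of $\Phi(q)/\sqrt{q\ln q}$, note the overlap of ranges, and flag the tiny cases and the computational certification issue explicitly, but the substance is identical.
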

\begin{proof}  For $q\le 12755807$ we checked by computer that the
implicit bound $A(5,(q-5)^2)<1.8341\sqrt{q\ln q}$; so in this region the assertion  is provided by the bound $A(5,(q-5)^2)$,
see Observation \ref{observ3} and Fig.\,\ref{fig_2}.
It is easy to see that $\Phi(q)/\sqrt{q\ln q}$ is a decreasing function of~$q$. Moreover, $\Phi(q)/\sqrt{q\ln q}<1.835$ for $q=12755807$.
So, for $q>12755807$ the assertion is provided by the bound~C.
\end{proof}
The  bound \eqref{eq3_1835} is presented by the dashed red line $y=1.835$ in Figs. \ref{fig_1} and \ref{fig_2}.

\section{Computer assisted results on $t(q)$ and $t^*(q)$}\label{sec_comput}

Let $\overline{t}(q)$ be the smallest \emph{known} size of an AC-subset in $\PG(2,q)$. Let
$\overline{t^*}(q)=\overline{t}(q)/\sqrt{q\ln q}.$
We denote the following sets of values of $q$:
$Q_2:=\{5\le q\le 33013,~ q \text{ prime power}\};$
$Q_3:=\{5\le q\le 32,~ q \text{ prime power}\};$
$Q_4:=Q_1\cup\{160801,208849,253009\}$. Let $Q_1$ be as in \eqref{eq1_region_Q1}.

For the set $Q_3$ we obtained by computer search the smallest sizes
$t(q)$ of AC-subsets of $\C$ in $\PG(2,q)$, see Table \ref{tabQ3}.
The algorithm, used in the search,
fixes a conic, computes all the non-equivalent point subsets of the conic of a certain size (6 in our complete cases)
and extends each of them trying to obtain a minimal AC-subset. Each time an example is found only smaller examples are looked for.
Minimality is checked explicitly:
once we have found an AC-subset we test that deleting from it a point in all possible ways no almost complete subset is obtained.
All computations are performed using the system for symbol calculations MAGMA \cite{Magma}.
\begin{table}[htbp]
\caption{ The smallest sizes $t(q)$ of AC-subsets of $\C$ in $\PG(2,q)$, $q\in Q_3$ }
 \renewcommand{\arraystretch}{1.00}
 \begin{center}
 \begin{tabular}
{@{}rr|rr|rr|rr|rr|rr|rr|rr@{}}
 \hline
 $q$ &$t(q)$& $q$ &$t(q)$& $q$ &$t(q)$& $q$ &$t(q)$& $q$ &$t(q)$& $q$ &$t(q)$& $q$ &$t(q)$& $q$ &$t(q)$\\
 \hline
5&5&7&6&8&6&9&6&11&8&13&8&16&9&17&10\\
19&11&23&12&25&12&27&13&29&13&31&14&32&15\\
\hline
 \end{tabular}\label{tabQ3}
 \end{center}
 \end{table}

 For the sets $Q_2$ and $Q_4$ we obtained small AC-subsets of $\C$ in $\PG(2,q)$ by computer search\footnote{The computer
search for $q\in
Q_2\cup Q_4$ has been carried out using computing resources of the
federal collective usage center Complex for Simulation and Data
Processing for Mega-science Facilities at NRC ``Kurchatov Institute",
http://ckp.nrcki.ru/.}. For it we  used step-by-step randomized greedy algorithms
similar to those from \cite{BDFKMP-JG2016}, see also the references therein. Recall that at each step a randomized greedy algorithm
maximizes some objective
function $f$, but some steps are executed in a random manner. Also, if one and the same maximum
of $f$ can be obtained in different ways, the choice is made at random. As the value of the objective
function, the number of points lying on bisecants of the running subset is considered.

As far as the authors know, sizes of AC-subsets, obtained by the mentioned computer search, are the smallest known. The corresponding values of $\overline{t^*}(q)$ are shown by the bottom black curve in Fig.\,\ref{fig_1}. Recall that,
\begin{align*}
   t^*(q)=\frac{t(q)}{\sqrt{q\ln q}}\,\,.
\end{align*}
The values $\overline{t}(q)$ and  $\overline{t^*}(q)$ for $q\in Q_4$ and prime $q\in Q_2$  are given in Tables 3 and 4, respectively, see Appendix.
As values of $\overline{t^*}(q)$ are not integers, in Tables 3 and 4 we give rounded values of $\overline{t^*}(q)$, moreover we round up.  This explains
the entry ``\,$\overline{t^*}(q)<$'' in the top of columns.

In Table 4, the values $\overline{t^*}(q)$ are written for not all $q$'s. The rules for entries $\overline{t^*}(q)$ are as follows. Assume that the following holds: $q'<q''$; the values $\overline{t^*}(q')$ and $\overline{t^*}(q'')$ are written in Table 4; no value $\overline{t^*}(q)$ is written in the table if $q'<q<q''$.  Then  $\overline{t^*}(q')\le\overline{t^*}(q'')$ and $\overline{t^*}(q)\le\overline{t^*}(q')$ with $q'<q<q''$.

For example, one may take $q'=19$ and $q''=307$. We see that no value $\overline{t^*}(q)$ is written in Table 4 if $19<q<307$. We have $\overline{t^*}(19)\approx1.471<\overline{t^*}(307)\approx1.479$ and $\overline{t^*}(q)\le1.471$ with $19<q<307$.

So, in Table 4, the blank on place $\overline{t^*}(q)$ means that $\overline{t^*}(q)\le\overline{t^*}(q')$ under the conditions that $q'<q$, value $\overline{t^*}(q')$ is written in the table, and no  value $\overline{t^*}(q^\bullet)$ is written if $q'<q^\bullet<q$.

By computer search for the sets $Q_2$ and $Q_4$, see Tables 3 and 4, we have Theorem~\ref{th4_computBnd}.
\begin{theorem}\label{th4_computBnd}
The following upper
bounds on the smallest size $t(q)$ of an AC-subset of the conic $\C$ in $\PG(2,q)$
hold:
\begin{align}
& t(q)<1.525\sqrt{q\ln q}, &&8\le q\le 887,~ q \text{ prime power},~q\ne11;\displaybreak[2]\label{eq4_1525}\\
& t(q)<1.548\sqrt{q\ln q}, &&887< q\le 1553,~ q \text{ prime power};\displaybreak[2]\label{eq4_1548}\\
& t(q)<1.572\sqrt{q\ln q}, &&1553< q\le 2351,~ q \text{ prime power}, ~q=11;\displaybreak[2]\label{eq4_1572}\\
& t(q)<1.585\sqrt{q\ln q}, &&2351< q\le 4027,~ q \text{ prime power};\displaybreak[2]\label{eq4_1585}\\
& t(q)<1.620\sqrt{q\ln q}, && 4027< q\le 17041,~ q \text{ prime power};\displaybreak[2]\label{eq4_162}\\
& t(q)<1.635\sqrt{q\ln q}, && 17041< q\le 33013,~ q \text{ prime power},~q=7;\displaybreak[2]\label{eq4_1638}\\
&t(q)<1.674\sqrt{q\ln q}, && q=p^m,~p \text{ prime},~m\ge2,~q\in Q_1;\displaybreak[2]\label{eq4_1673}\\
&t(q)<1.686\sqrt{q\ln q}, && q=160801,208849,253009.\notag
\end{align}
\end{theorem}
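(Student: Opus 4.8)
The plan is to derive each of the eight inequalities directly from the computer search, exploiting that $t(q)$ is by definition the \emph{smallest} size of an AC-subset of $\C$. First I would record the elementary reduction: any explicitly exhibited AC-subset of size $s$ in $\PG(2,q)$ forces $t(q)\le s$, hence $t^*(q)=t(q)/\sqrt{q\ln q}\le s/\sqrt{q\ln q}$. Applying this to the sets produced by the randomized greedy search for $q\in Q_2\cup Q_4$ gives $t^*(q)\le\overline{t^*}(q)$ for every such $q$, where $\overline{t^*}(q)=\overline{t}(q)/\sqrt{q\ln q}$ is the ratio recorded (rounded up) in Tables 3 and 4, with primes $q\in Q_2$ tabulated in Table 4 and non-prime prime powers $q\in Q_1\subseteq Q_4$ in Table 3. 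Thus each claimed bound $t(q)<c\sqrt{q\ln q}$ reduces to the purely numerical assertion $\overline{t^*}(q)<c$ for all $q$ in the stated range.

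Next, for each line of the statement I would verify that the displayed constant $c$ strictly exceeds $\max_q\overline{t^*}(q)$ over the indicated interval. For the prime-power ranges inside $Q_2$ this uses the monotonicity/sparsity convention of Table 4: the tabulated ratios form a non-decreasing sequence, and each unlisted value satisfies $\overline{t^*}(q)\le\overline{t^*}(q')$ for the nearest preceding tabulated $q'$, so the maximum over an interval $(a,b]$ is attained at the largest tabulated entry it contains, which I would check lies strictly below $c$. For the mixed intervals (which contain both primes and non-prime prime powers) one takes the maximum over the relevant entries of both Table 3 and Table 4. Because $\overline{t^*}(q)$ is tabulated \emph{rounded up}, the tabulated figures are themselves upper bounds for the true ratios, which is exactly what keeps every inequality strict. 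The anomalous small values $q=7$ and $q=11$, excluded from their nearest interval and reinserted into a later one, I would treat individually: from Table \ref{tabQ3} one has $t(7)=6$ and $t(11)=8$, giving $\overline{t^*}(7)\approx1.626$ and $\overline{t^*}(11)\approx1.558$, which respectively clear the constants $1.635$ and $1.572$ attached to the intervals into which they are placed (and exceed $1.525$, explaining the exclusion $q\ne11$ in the first line). The two final lines are handled identically, using the $Q_1$ data in Table 3 for the constant $1.674$ and the three separate entries $160801,208849,253009$ for the constant $1.686$.

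The essential content of the argument, and its only genuine obstacle, lies in the computer search itself rather than in any numerical comparison. What must be guaranteed is that each set returned by the search really is an AC-subset of the claimed size, that is, a proper subset of $\C$ covering every point of $\M_q$. Here I would emphasize that, although the search is randomized, the validity of its output is certified \emph{deterministically}: for each produced set one confirms that it is a proper subset of $\C$ and that every point of $\M_q$ lies on one of its bisecants, and one records its cardinality. The randomization affects only whether, and how quickly, a small AC-subset is found, never the correctness of the resulting bound. Granting the (independently verifiable) validity of these certificates, the eight inequalities follow at once by combining $t^*(q)\le\overline{t^*}(q)$ with the range-by-range comparisons against the constants $1.525,1.548,1.572,1.585,1.620,1.635,1.674,1.686$.
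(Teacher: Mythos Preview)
Your proposal is correct and matches the paper's own justification, which is simply ``By computer search for the sets $Q_2$ and $Q_4$, see Tables 3 and 4, we have Theorem~\ref{th4_computBnd}.'' You have fleshed out explicitly the reduction $t(q)\le\overline{t}(q)$, the use of the monotonicity convention governing the blank entries in Table~4, and the individual checks for $q=7$ and $q=11$, but the underlying argument is identical: read off the tabulated values of $\overline{t^*}(q)$ and compare range by range against the stated constants.
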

The  bounds \eqref{eq4_1638}, \eqref{eq4_1673} are presented by dashed red lines $y=1.635$, $y=1.674$ in Fig.\,\ref{fig_1}.

\section{New  bounds on $t(q)$  and completeness of normal rational curves} \label{ses_NRC}

\emph{Proof of Corollary \ref{cor1_main_NRC}.}
  We substitute the new bounds of Theorem \ref{th1_main_bnd} in relation \eqref{eq1_Storm3.1} taken from \cite[Theorem 3.1]{St_1992_ComplNRC}.
 $\qed$

\emph{Proof of Corollary \ref{cor1_q^odd_NRC}.}
    We act analogously to the proof of \cite[Theorem 3.5]{St_1992_ComplNRC}, changing in it $6\sqrt{q\ln q}$ by $c\sqrt{q\ln q}$. As the result we obtain
    inequality \eqref{eq1_q^odd}.

    By \eqref{eq1_q^odd}, for $c=1.835$, $h\ge29$, we have $p_0(h)\ge 33079>33013$; but for $c=1.835$, $h\le28$, it holds that
    $p_0(h)\le31840<33013.$ So, by \eqref{eq3_1835} and \eqref{eq4_1525}--\eqref{eq4_1638}, we may take $c=1.835$ for $h\ge29$ and $c=1.635$ for $h\le28$.

    Again we use \eqref{eq1_q^odd}. For $c=1.635$, $h\ge20$, we have $p_0(h)\ge17091>17041$; but for $c=1.635$, $h\le19$, it holds that
    $p_0(h)\le16164<17041.$ So, by \eqref{eq4_1525}--\eqref{eq4_1638}, we may take $c=1.635$ for $20\le h\le28$ and $c=1.62$ for $h\le19$.

    Now for $h=1,\ldots,5$ we found $p_0(h)$ as a solution of \eqref{eq1_q^odd} taking $c$ on the base Theorem~\ref{th4_computBnd}.
     For the given $h$, at the beginning we obtain $p_0(h)$ with $c=1.62$. Then we decrease $c$ using \eqref{eq4_1525}--\eqref{eq4_1585} and get
     a smaller $p_0(h)$. For $c=1.62$ we obtain $p_0(1)=877$, $p_0(2)=1543$, $p_0(3)=2273$, $p_0(4)=3037$, $p_0(5)=3821$. So,
       we may put $c=1.525$ for $h=1$,  $c=1.548$ for $h=2$,
     $c=1.572$ for $h=3$, and
     $c=1.585$ for $h=4,5$, see  \eqref{eq4_1525}, \eqref{eq4_1548}, \eqref{eq4_1572}, and \eqref{eq4_1585}, respectively. Solutions of inequality \eqref{eq1_q^odd} for these $(c,h)$ are the values
     $p_0(1),\ldots,p_0(5)$ written in the assertion of the corollary.
 $\qed$

\begin{remark}
 We can also improve the result of \cite[Theorem 3.4]{St_1992_ComplNRC}.
   If in the proof of \cite[Theorem 3.4]{St_1992_ComplNRC} one uses the new bound $t(q)<1.835\sqrt{q\ln q}$
instead of~\eqref{eq1_Kovacs} then the following assertion can be proved:
\emph{for prime $p\ge 76207$ every normal rational curve in $\PG(N,p)$
with $2\le N\le p-2$ is a complete $(q+1)$-arc.}

For comparison note that in \cite[Theorem 3.4]{St_1992_ComplNRC} the value $p> 1007215$ is pointed out.

Of course, due to the results of \cite{Ball-book2015,Ball-JEMS2012,BallBeule-DCC2012}, see row 5 of Table \ref{tab1}, we know that for \emph{all primes $p$} normal rational curves in $\PG(N,p)$ are complete.
\end{remark}

The authors are grateful to participants of the Coding Theory seminar at the
Kharkevich Institute for Information Transmission Problems of the Russian Academy of Sciences
for the constructive and useful discussion of the work.

\newpage
\section*{Appendix. Tables of the smallest known sizes $\overline{t}(q)$ of AC-subsets in $\PG(2,q)$}
\textbf{Table 3. }The smallest known sizes $\overline{t}(q)$ of AC-subsets in $\PG(2,q)$ and values
$\overline{t^*}(q)$,\\ $q$ non-prime, $q\in \{8\le q\le 139129,~ q=p^m,~p \text{ prime},~m\ge2\}\cup\{160801,208849,253009\}$ \medskip

\noindent


\end{document}